\documentclass{article}

\usepackage{microtype}
\usepackage{graphicx}
\usepackage{subcaption}
\usepackage{booktabs} 

\usepackage{hyperref}

\usepackage[preprint]{icml2026}

\usepackage{amsmath}
\usepackage{amssymb}
\usepackage{mathtools}
\usepackage{amsthm}

\DeclareMathOperator*{\minimize}{min}

\DeclareMathOperator*{\subjectto}{subject\,to}

\DeclareMathOperator*{\softmax}{softmax}

\usepackage[capitalize,noabbrev]{cleveref}

\theoremstyle{plain}
\newtheorem{theorem}{Theorem}[section]
\newtheorem{proposition}[theorem]{Proposition}

\theoremstyle{definition}
\newtheorem{definition}[theorem]{Definition}

\theoremstyle{remark}
\newtheorem{remark}[theorem]{Remark}

\usepackage[textsize=tiny]{todonotes}

\icmltitlerunning{Exact Instance Compression for Convex Empirical Risk Minimization via Color Refinement}

\begin{document}

\twocolumn[
  \icmltitle{Exact Instance Compression for Convex Empirical Risk Minimization via Color Refinement}



  \icmlsetsymbol{equal}{*}

  \begin{icmlauthorlist}
    \icmlauthor{Bryan Zhu}{primes}
    \icmlauthor{Ziang Chen}{mit}
  \end{icmlauthorlist}

  \icmlaffiliation{primes}{Program for Research in Mathematics, Engineering, and Science (PRIMES-USA), Massachusetts Institute of Technology, Cambridge, USA}
  \icmlaffiliation{mit}{Department of Mathematics, Massachusetts Institute of Technology, Cambridge, USA}

  \icmlcorrespondingauthor{Bryan Zhu}{bryanrzhu@gmail.com}
  \icmlcorrespondingauthor{Ziang Chen}{ziang@mit.edu}

  \icmlkeywords{color refinement, empirical risk minimization, instance compression}

  \vskip 0.3in
]



\printAffiliationsAndNotice{}  

\begin{abstract}
  Empirical risk minimization (ERM) can be computationally expensive, with standard solvers scaling poorly even in the convex setting. We propose a novel lossless compression framework for convex ERM based on color refinement, extending prior work from linear programs and convex quadratic programs to a broad class of differentiable convex optimization problems. We develop concrete algorithms for a range of models, including linear and polynomial regression, binary and multiclass logistic regression, regression with elastic-net regularization, and kernel methods such as kernel ridge regression and kernel logistic regression. Numerical experiments on representative datasets demonstrate the effectiveness of the proposed approach.
\end{abstract}

\section{Introduction}

Empirical risk minimization (ERM) is a foundational paradigm in modern machine learning and statistical inference, underlying a wide range of models, including linear and generalized linear models, kernel methods, and regularized regression.

Although many ERM formulations are convex, solving them at scale remains computationally challenging: the complexity of first- and second-order optimization methods typically grows at least linearly with the number of samples and features, and can become significantly worse in the presence of dense data matrices, kernelized representations, or structured regularization.

These computational bottlenecks have motivated extensive research on preprocessing and problem-reduction techniques that seek to reduce instance size while preserving optimal solutions, including randomized sketching \citep{mahoney2011randomized, pilanci2016iterative}, subsampling \citep{drineas2012fast}, coreset construction \citep{feldman2020turning}, and symmetry-based reductions \citep{ravanbakhsh2017equivariance}.

In this paper, we study lossless compression of convex ERM problems: deterministic reductions that provably preserve optimal solutions and convexity guarantees while substantially reducing problem dimension. Unlike approximate methods such as sketching or coresets, lossless compression yields a reduced optimization problem whose solution can be lifted exactly to a solution of the original problem. This property is particularly attractive in settings where accuracy guarantees, interpretability, or certification are required.

Our approach builds on recent advances in dimension reduction for optimization based on color refinement, a classical algorithm from graph isomorphism theory \citep{weisfeiler1968reduction}. Color refinement iteratively assigns colors to vertices of a graph in a way that captures structural equivalence, and has been widely used as a practical heuristic for detecting symmetries in large combinatorial structures \citep{babai2016graph}. Since matrices can be naturally represented as edge-weighted bipartite graphs, color refinement can be used to identify symmetric or functionally equivalent rows and columns in optimization problems.

Recent work has shown that this idea leads to exact dimension reduction for linear programs (LPs), where variables and constraints can be grouped into equivalence classes and compressed without changing feasibility or optimality \cite{grohe}. This framework was subsequently extended to convex quadratic programs (QPs), enabling lossless reduction for objectives with quadratic structure while preserving convexity \cite{mladenov}. These results demonstrate that reductions can go beyond purely combinatorial optimization and apply to continuous convex problems.

However, existing methods are limited in scope: they rely heavily on linear or quadratic structure and do not directly apply to general convex ERM objectives, such as those arising from logistic loss, multinomial loss, elastic-net regularization, or kernelized models. Moreover, prior symmetry-based approaches typically require explicit permutation invariance of the problem, which is often absent in practical machine learning formulations even when many samples or features behave identically.

In this work, we develop a general dimension reduction theorem for differentiable convex programming and instantiate it in practical algorithms for convex ERM families. Our method identifies conditions under which samples, features, and constraints behave identically, even when the optimization problem is not explicitly invariant under permutations of these identically-behaving items. As a consequence, our framework strictly generalizes prior reduction techniques and applies to a much broader class of ERM formulations.

The remainder of the paper is organized as follows. Section~\ref{sec:preliminaries} introduces background material, notation, and relevant prior work. Section~\ref{sec:reductiontheorem} presents our generalization of color-refinement–based reductions to convex programs and shows that our method always yields at least as much reduction as approaches based on permutation invariance. Section~\ref{sec:ml} discusses applications of our theorem to machine learning models, including linear and polynomial regression, binary and multiclass logistic regression, regression with elastic-net regularization, and kernel-based models such as kernel ridge regression and kernel logistic regression. Section~\ref{sec:experiments} reports experimental results for binary logistic regression on various datasets from OpenML \cite{openml} and LIBSVM \cite{libsvm}. Section~\ref{sec:conclusion} concludes the paper.

\section{Preliminaries}\label{sec:preliminaries}

In this section, we review existing theoretical tools underlying our dimension reduction framework.

We first recall how equitable partitions lead to exact reductions for linear programs \citep{grohe} and convex quadratic programs \citep{mladenov}. Then we introduce color refinement \citep{berkholz}, a combinatorial algorithm that efficiently computes the coarsest equitable partition and serves as the algorithmic backbone of related dimension reduction approaches.

\subsection{Dimension Reduction for Linear Programs}
We begin with linear programs (LPs), where color refinement-based dimension reduction is most transparent.
Throughout, consider an LP of the form
\vspace{-1mm}
\begin{equation*}
\min \; c^\top x
\quad \text{subject to } Ax \le b,\quad l \le x \le u .
\vspace{-1mm}
\end{equation*}
We refer to the constraints \(l \le x \le u\) as the \emph{box domain}. A point \(x\) is said to be \emph{feasible} if it satisfies both the box constraints and the linear inequalities \(Ax \le b\).

The key idea is that when the LP exhibits sufficient regularity across variables and constraints, many of them can be aggregated without loss of optimality. This aggregation is formalized through the notion of equitable partitions.

We say that a \emph{partition} of a set $S$ is a set of nonempty, pairwise disjoint subsets of $S$ whose union equals $S$. Let \(\mathcal{P}\) be a partition of the row indices of \(A\), and let \(\mathcal{Q}\) be a partition of its column indices. We say that the pair \((\mathcal{P},\mathcal{Q})\) is \emph{equitable} if, for every \(S \in \mathcal{P}\) and \(T \in \mathcal{Q}\),
\vspace{-3mm}
\begin{itemize}
    \item $\sum_{j \in T} A_{ij}$ is constant over all $i \in S$, and
    \item $\sum_{i \in S} A_{ij}$ is constant over all $j \in T$.
    \vspace{-3mm}
\end{itemize}
Equivalently, each submatrix induced by \(\mathcal{P} \times \mathcal{Q}\) has both constant row sums and constant column sums.

To express aggregation algebraically, for any partition \(\mathcal{P}\) of a finite set \(V\), define the \emph{partition matrix} \(\Pi_{\mathcal{P}} \in \{0,1\}^{V \times \mathcal{P}}\) by
\vspace{-1mm}
\begin{equation*}
(\Pi_{\mathcal{P}})_{vS} =
\begin{cases}
1, & v \in S,\\
0, & \text{otherwise}.
\end{cases}
\vspace{-2mm}
\end{equation*}
We also define the scaled transpose \(\Pi_{\mathcal{P}}^{\text{Scaled}}\) by normalizing each row of \(\Pi_{\mathcal{P}}^\top\)
so that it is stochastic.

We now state the conditions under which an LP admits an exact reduced formulation. Assume that
\vspace{-3mm}
\begin{itemize}
\item for all \(T \in \mathcal{Q}\), the triples \((c_j, l_j, u_j)\) are identical for all \(j \in T\);
\vspace{-2mm}
\item the pair \((\mathcal{P},\mathcal{Q})\) is equitable with respect to \(A\);
\vspace{-2mm}
\item for all \(S \in \mathcal{P}\), the values \(b_i\) are identical for all \(i \in S\).
\vspace{-3mm}
\end{itemize}
Under these assumptions, define the reduced quantities
\vspace{-2mm}
\begin{align*}
c' &= \Pi_\mathcal{Q}^\top c, &
A' &= \Pi_\mathcal{P}^{\text{Scaled}} A \Pi_Q, &
b' &= \Pi_\mathcal{P}^{\text{Scaled}} b, \\
x' &= \Pi_\mathcal{Q}^{\text{Scaled}} x, &
l' &= \Pi_\mathcal{Q}^{\text{Scaled}} l, &
u' &= \Pi_\mathcal{Q}^{\text{Scaled}} u .
\vspace{-5mm}
\end{align*}
The reduced LP is obtained by replacing \((c,A,b,l,u)\) with
\((c',A',b',l',u')\).
It is established in \citet{grohe} that \(x\) is an optimal solution of the original LP if and only if
\(x'\) is an optimal solution of the reduced LP.
Intuitively, this procedure collapses all variables within each color \(T \in Q\)
into a single representative variable and aggregates all constraints within
each color \(S \in P\) into a single constraint.

\subsection{Dimension Reduction for Quadratic Programs}\label{subsec:qps}

We next consider convex quadratic programs (QPs), which arise naturally in many optimization and learning problems.
Specifically, consider a QP of the form
\vspace{-1.5mm}
\begin{equation*}
\min \; \tfrac{1}{2} x^\top Q x + c^\top x
\quad \text{subject to } Ax \le b,\quad l \le x \le u,
\vspace{-1.5mm}
\end{equation*}
where \(Q \succeq 0\) and is symmetric.
The reduction procedure largely mirrors that of linear programs, with one
additional requirement ensuring compatibility with the quadratic term.

In addition to the assumptions in the LP case, we require that the partition \((\mathcal{Q}, \mathcal{Q})\)
be equitable with respect to the matrix \(Q\). (Here, $\mathcal{Q}$ partitions both the rows and columns of \(Q\).)
Under this condition, the reduced quadratic matrix is given by
\begin{equation*}
Q' = \Pi_\mathcal{Q}^\top Q \Pi_\mathcal{Q} .
\end{equation*}
As shown in \citet{mladenov}, this construction preserves convexity and optimality:
solutions of the reduced QP correspond exactly to solutions of the original problem.

\subsection{Color Refinement}

Color refinement is a graph isomorphism algorithm \cite{berkholz}. It gradually refines a coloring of a graph, which can also be seen as a partition of its vertices. A coloring $\mathcal{C}_1$ \textit{refines} another coloring $\mathcal{C}_2$ if, for all $C_1\in\mathcal{C}_1$, there exists some $C_2\in\mathcal{C}_2$ such that $C_1\subseteq C_2$. Here, $C_1$ is \textit{finer}, and $C_2$ is \textit{coarser}. Additionally, we call $\mathcal{C}$ the \textit{unit coloring} if $|\mathcal{C}|=1$.

Color refinement can be extended to matrices to find the equitable partition $(\mathcal{P}, \mathcal{Q})$ that minimizes $|\mathcal{P}|$ and $|\mathcal{Q}|$, which can be shown to be unique \cite{grohe}. This is called the \textit{coarsest equitable partition}. We will present an explicit version in Section~\ref{sec:colorrefinement}.


\section{Dimension Reduction}\label{sec:reductiontheorem}
Section~\ref{sec:preliminaries} reviewed how equitable partitions yield exact reductions for linear and quadratic programs.
We now generalize this principle to differentiable convex programs, establishing a unified reduction theorem that applies to a broad class of empirical risk minimization (ERM) problems.
Our result formalizes when variables and constraints can be aggregated without requiring explicit permutation symmetry of the optimization problem.

\subsection{Reduction Theorem}

We consider a general convex program with $n$ variables and $m$ constraints:
\vspace{-2mm}
\begin{align*}
    \minimize~~&\ F(x),\\
    \subjectto~~&\ G_i(x) \le b_i, \quad i = 1,\dots,m,\quad l \le x \le u.
    \vspace{-2mm}
\end{align*}
We assume that $F$ and each $G_i$ are convex and differentiable on the box
domain $\{x \mid l \le x \le u\}$.
As in the linear and quadratic cases, our goal is to identify groups of variables and
constraints that behave identically under aggregation.
This is captured by the following notion.

\begin{definition}[Reduction Coloring]
    A coloring $\mathcal{P}$ of the constraints and a coloring $\mathcal{Q}$ of the variables of a convex program is a \textit{reduction coloring} if the following conditions hold for all $\hat{x}$ in the box domain satisfying $\hat{x}_{j_1} = \hat{x}_{j_2}$ whenever $j_1$ and $j_2$ belong to the same color in $\mathcal{Q}$:
    \vspace{-3mm}
    \begin{itemize}
        \item If $T\in\mathcal{Q}$, then for all $j_1, j_2\in T$, 
        \vspace{-1mm}
        \[\left.\frac{\partial F}{\partial x_{j_1}}\right|_{x=\hat{x}}=\left.\frac{\partial F}{\partial x_{j_2}}\right|_{x=\hat{x}}.
        \vspace{-1mm}
        \]
        \item If $S\in\mathcal{P}$ and $T\in\mathcal{Q}$, then for all $j_1, j_2\in T$, \[\left.\frac{\partial}{\partial x_{j_1}}\left(\sum_{i\in S}G_i\right)\right|_{x=\hat{x}}=\left.\frac{\partial}{\partial x_{j_2}}\left(\sum_{i\in S}G_i\right)\right|_{x=\hat{x}}.\]
        \item For each $S\in\mathcal{P}$, $G_i(\hat{x})$ is equal for every $i\in S$.
        \vspace{-2mm}
        \item For each $S\in\mathcal{P}$, $b_i$ is equal for every $i\in S$.
        \vspace{-2mm}
        \item For each $T\in\mathcal{Q}$, $(l_j, u_j)$ is equal for every $j\in T$.
        \vspace{-3mm}
    \end{itemize}
\end{definition}
Intuitively, a reduction coloring ensures that variables in the same color are indistinguishable from the perspective of the objective and aggregated constraints, and that constraints in the same color impose identical restrictions on color-constant solutions.

In general convex programs, verifying these conditions directly may be nontrivial.
In Section~\ref{sec:ml}, we provide constructive sufficient conditions for common ERM
families, where reduction colorings are computed efficiently via color refinement
applied to the data matrix, labels, and weights.

\begin{definition}
    Given a reduction coloring $(\mathcal{P}, \mathcal{Q})$ of a convex program, we define a \textit{reduced program}:
    \vspace{-3mm}
    \begin{itemize}
        \item The reduced objective $F'$ is obtained by substituting a single reduced variable $x'_T$ for all original variables $x_j$ belonging to the same $T \in \mathcal{Q}$. In other words, for each reduced vector $x'\in\mathbb{R}^{|\mathcal{Q}|}$, we set a point $\hat{x}\in\mathbb{R}^n$ such that $\hat{x}_j=x'_T$ whenever $j\in T$. Then we set $F'(x') = F(\hat{x})$.
        \vspace{-2mm}
        \item Reduced constraints $G_S'$ are given by transforming $G_i$ similarly as the objective for any $i\in S$ (well defined since $G_S'(x')=G_i(\hat x)$ is equal for all $i\in S$ for all color-constant $\hat{x}$).
        \vspace{-2mm}
        \item For $x$, $l$, and $u$, we left-multiply by $\Pi_\mathcal{Q}^{\text{Scaled}}$ to yield the reduced vectors.
        \vspace{-2mm}
        \item For $b$, we use $b'=\Pi_\mathcal{P}^{\text{Scaled}}b$.
        \vspace{-3mm}
    \end{itemize}
\end{definition}

We can now state the main reduction theorem.

\begin{theorem}\label{reductiontheorem}
    Consider any convex program with $F$ and each $G_i$ differentiable on the box domain. Let $(\mathcal{P}, \mathcal{Q})$ be a reduction coloring. If $x$ is an optimum for the original program, then $x'=\Pi_\mathcal{Q}^{\text{Scaled}}x$ is an optimum for the reduced program. If $x'$ is an optimum for the reduced program, then $x=\Pi_\mathcal{Q}x'$ is an optimum for the original program.
\end{theorem}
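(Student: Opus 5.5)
The plan is a \emph{symmetrization} argument. Averaging within each color class never hurts, and the reason is convexity combined with the gradient conditions of a reduction coloring; no permutation invariance is needed. For any $y$ in the box domain define its color-average $\bar y = \Pi_\mathcal{Q}\Pi_\mathcal{Q}^{\text{Scaled}} y$, so that $\bar y_j$ is the mean of $y$ over the color $T\ni j$. Three observations follow. First, $\bar y$ is color-constant. Second, $\bar y$ lies in the box, because $(l_j,u_j)$ is constant on each $T$ and a mean of numbers in $[l_T,u_T]$ stays in that interval. Third, $y-\bar y$ sums to zero over every color $T$. The reduced program is exactly the original program restricted to color-constant points, via the bijection $x'\leftrightarrow \Pi_\mathcal{Q}x'$. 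Indeed $F'(x')=F(\Pi_\mathcal{Q}x')$ and $G'_S(x')=G_i(\Pi_\mathcal{Q}x')$ for every $i\in S$. Since $b$ is constant on each $S$, we have $b'_S=b_i$. The reduced box is $[l_T,u_T]$. So $x'$ is feasible for the reduced program if and only if $\Pi_\mathcal{Q}x'$ is feasible for the original, and the two have equal objective values.

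The key lemma is this: if $y$ is feasible for the original program, then $\bar y$ is feasible and $F(\bar y)\le F(y)$. For the objective, apply the first-order convexity inequality at the point $\bar y$:
\[
F(y)\ \ge\ F(\bar y)+\nabla F(\bar y)^\top (y-\bar y).
\]
Since $\bar y$ is color-constant, the first condition of a reduction coloring says $\nabla F(\bar y)$ is constant on each color $T$. Write $g_T$ for that common value. Then
\[
\nabla F(\bar y)^\top (y-\bar y)=\sum_T g_T\sum_{j\in T}(y_j-\bar y_j)=0.
\]
For the constraints, fix $S\in\mathcal{P}$ and apply the same inequality to the convex function $\sum_{i\in S}G_i$. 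Its gradient at $\bar y$ is color-constant by the second condition, so
\[
\sum_{i\in S}G_i(\bar y)\le \sum_{i\in S}G_i(y)\le |S|\,b_S .
\]
By the third condition all $G_i(\bar y)$ with $i\in S$ are equal, so each one satisfies $G_i(\bar y)\le b_S=b_i$.

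Both directions of the theorem then follow by comparing optimal values. Suppose $x$ is optimal for the original program. The point $x'=\Pi_\mathcal{Q}^{\text{Scaled}}x$ lifts to $\bar x$, which is feasible, so $x'$ is feasible for the reduced program, with $F'(x')=F(\bar x)\le F(x)$. Any reduced-feasible $z$ lifts to an original-feasible $\Pi_\mathcal{Q}z$, so $F'(z)=F(\Pi_\mathcal{Q}z)\ge F(x)\ge F'(x')$, and hence $x'$ is optimal. Conversely, suppose $x'$ is reduced-optimal and set $x=\Pi_\mathcal{Q}x'$, which is feasible. For any original-feasible $y$, the point $\Pi_\mathcal{Q}^{\text{Scaled}}y$ is reduced-feasible, which gives
\[
F(x)=F'(x')\le F'(\Pi_\mathcal{Q}^{\text{Scaled}}y)=F(\bar y)\le F(y).
\]

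The only delicate step is justifying the first-order inequality $f(y)\ge f(\bar y)+\nabla f(\bar y)^\top(y-\bar y)$ when differentiability is assumed only on the closed box, since $\bar y$ may lie on its boundary. I would handle this with the one-sided directional derivative along the segment from $\bar y$ to $y$, which stays inside the convex box. Convexity of $t\mapsto f(\bar y+t(y-\bar y))$ on $[0,1]$ gives $f(y)-f(\bar y)\ge \frac{d}{dt}\big|_{t=0^+}$, and differentiability identifies this derivative with $\nabla f(\bar y)^\top(y-\bar y)$.
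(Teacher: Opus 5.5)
Your proposal is correct and follows the paper's own three-step proof: color-averaging does not increase $F$; it preserves feasibility through the aggregated constraints $\sum_{i\in S}G_i$ together with equality of $G_i(\bar y)$ across $S$; and both optimality directions then follow through the lift $\Pi_\mathcal{Q}$. Your first-order inequality $F(y)\ge F(\bar y)+\nabla F(\bar y)^\top(y-\bar y)$, with its color-wise cancellation, is just the unpacked form of the paper's Lagrangian stationarity argument for minimizing $F(z)$ subject to fixed color sums, and working along the segment inside the box is, if anything, slightly cleaner than the paper's minimization over all of $\mathbb{R}^n$.
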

We briefly outline a proof of Theorem~\ref{reductiontheorem}. Suppose that $(\mathcal{P}, \mathcal{Q})$ is a reduction coloring. Let $\hat{x}=\Pi_\mathcal{Q}\Pi_\mathcal{Q}^{\text{Scaled}} x$. Note that if $j_1\in T$ and $T\in\mathcal{Q}$, then 
\vspace{-2mm}
\[\hat{x}_{j_1}=\frac{\sum_{j_2\in T}x_{j_2}}{|T|}.
\vspace{-2mm}
\] We have the following:
\vspace{-3mm}
\begin{enumerate}
    \item The objective function does not increase when $x$ is changed to $\hat{x}$. This can be shown by fixing $x$, considering the problem of minimizing $F(z)$ over $z\in\mathbb{R}^n$ subject to $\sum_{j\in T}z_j=\sum_{j\in T}x_j$ for all $T\in\mathcal{Q}$, and taking the Lagrangian at $z=\hat{x}$.
    \vspace{-2mm}
    \item If $x$ is feasible, $\hat{x}$ must be too. This is true by applying the above logic on $F$ to $\sum_{i\in S}G_i$ for each $S\in\mathcal{P}$, then using the requirement that $G_i(\hat{x})$ is constant across $i\in S$.
    \vspace{-2mm}
    \item The above statements imply equivalence of the original and reduced problems.
    \vspace{-3mm}
\end{enumerate}
A full proof is given in Appendix~\ref{app:reductiontheoremproof}.


\subsection{Beyond Permutation Symmetry}
Reduction colorings capture equivalence under aggregated interactions, rather
than requiring explicit permutation invariance of the optimization problem.
As a result, our framework can yield strictly stronger compression than approaches
based solely on symmetry groups.

Formally, let $\Gamma$ denote the group of automorphisms $(\pi, \sigma) \in S_n \times S_m$
such that for all $x$ in the box domain:
\vspace{-3mm}
\begin{itemize}
    \item $F(\pi(x)) = F(x)$;
    \vspace{-2mm}
    \item $\pi(l) = l$ and $\pi(u) = u$;
    \vspace{-2mm}
    \item $G_{\sigma(i)}(\pi(x)) = G_i(x)$ and $b_{\sigma(i)} = b_i$
    for all $i = 1,\dots,m$.
    \vspace{-3mm}
\end{itemize}
The group $\Gamma$ acts on variables by $\pi$ and on constraints by $\sigma$.
Let $\mathcal{Q}_\Gamma$ and $\mathcal{P}_\Gamma$ denote the corresponding orbit
partitions of variables and constraints.

\begin{theorem}\label{coarsenesstheorem}
    If $(\mathcal{P}, \mathcal{Q})$ is the coarsest reduction coloring, then it is at least as coarse as $(\mathcal{P}_\Gamma, \mathcal{Q}_\Gamma)$.
\end{theorem}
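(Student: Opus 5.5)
The plan is to show directly that the orbit partition $(\mathcal{P}_\Gamma,\mathcal{Q}_\Gamma)$ is itself a reduction coloring. The result then follows from a coarsest-element argument. For that, I would first argue that reduction colorings are closed under a suitable join: if $(\mathcal{P}_1,\mathcal{Q}_1)$ and $(\mathcal{P}_2,\mathcal{Q}_2)$ are reduction colorings, the coarsest reduction coloring refines neither but is coarser than both. If the paper's notion of ``the coarsest reduction coloring'' is taken to mean one that is coarser than every reduction coloring, then it suffices to exhibit $(\mathcal{P}_\Gamma,\mathcal{Q}_\Gamma)$ as a reduction coloring. I would state this reduction explicitly and devote the bulk of the proof to verifying the five conditions of the definition.

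Fix $\hat{x}$ in the box domain that is constant on each orbit $T\in\mathcal{Q}_\Gamma$. For any $(\pi,\sigma)\in\Gamma$, $\pi$ maps each orbit to itself, so $\pi(\hat{x})=\hat{x}$.

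\emph{Objective gradient.} Differentiating $F(\pi(x))=F(x)$ at $\hat{x}$ by the chain rule gives $\partial_{\pi(j)}F(\pi(\hat{x}))=\partial_j F(\hat{x})$, with the convention $\pi(x)_{\pi(j)}=x_j$. Since $\pi(\hat{x})=\hat{x}$, we get $\partial_{\pi(j)}F(\hat{x})=\partial_jF(\hat{x})$. Any $j_1,j_2$ in the same orbit are related by some $\pi$, so the first condition holds.

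\emph{Constraint sums.} For $S\in\mathcal{P}_\Gamma$ we have $\sigma(S)=S$, so $\sum_{i\in S}G_i(\pi(x))=\sum_{i\in S}G_{\sigma^{-1}(i)}(x)=\sum_{i\in S}G_i(x)$. Hence $\sum_{i\in S}G_i$ is $\pi$-invariant, and the same differentiation argument yields the second condition.

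\emph{Remaining conditions.} For $i$ and $\sigma(i)$ in the same orbit, $G_{\sigma(i)}(\hat{x})=G_{\sigma(i)}(\pi(\hat{x}))=G_i(\hat{x})$, giving the third condition. The assumptions $b_{\sigma(i)}=b_i$, $\pi(l)=l$ and $\pi(u)=u$ give the last two.

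\emph{Main obstacle.} The technical care lies in the chain-rule step. Differentiability is only assumed on the box domain, so the invariance identity must be differentiated along directions inside (or one-sidedly at the boundary of) the box, which $\pi$ preserves because $\pi(l)=l$ and $\pi(u)=u$. One must also make sure the convention for how $\pi$ acts on coordinates is applied consistently, so that the index bookkeeping $\partial_{\pi(j)}$ versus $\partial_{\pi^{-1}(j)}$ comes out right; either choice works, since orbits are closed under inverses.

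The conceptually harder point is the existence and meaning of the ``coarsest'' reduction coloring. If that needs justification, I would try to show that the join of two reduction colorings is again one, as follows. For $\hat{x}$ constant on the joined classes, $\hat{x}$ is constant on the classes of both colorings. Gradient equalities then propagate along chains of overlapping classes, since equality is transitive. The condition on summed constraints is the delicate case, because sums over a joined $S$ combine different sets. I expect this to go through by adding up the equalities for the constituent classes. If it does not, I would instead fall back on interpreting the theorem as saying that the coarsest coloring computed by the paper's procedure dominates every reduction coloring.
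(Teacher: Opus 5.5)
Your proposal is correct and follows the paper's proof. Like the paper, you show that $(\mathcal{P}_\Gamma,\mathcal{Q}_\Gamma)$ is itself a reduction coloring: apply the chain rule to $F(\pi(x))=F(x)$ and to the $\sigma$-invariant sums $\sum_{i\in S}G_i$, obtain the remaining conditions from $\pi(\hat x)=\hat x$ together with $b_{\sigma(i)}=b_i$, $\pi(l)=l$, $\pi(u)=u$, and conclude by coarsest-ness. Your extra join argument for the existence of a coarsest reduction coloring, which the paper takes for granted, does go through as you expect: each class of $\mathcal{P}_1\vee\mathcal{P}_2$ is simultaneously a disjoint union of $\mathcal{P}_1$-classes and of $\mathcal{P}_2$-classes, so the summed-gradient equalities add up within a common $\mathcal{Q}_1$- or $\mathcal{Q}_2$-class and then chain by transitivity, and your fallback interpretation is not needed.
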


The proof, given in Appendix~\ref{app:coarsenesstheoremproof}, shows that any permutation symmetry induces a valid reduction coloring, implying that reduction colorings strictly generalize symmetry-based reductions.
We also provide an example where nontrivial compression is possible even in the absence of any nontrivial permutation symmetry.

\subsection{Color Refinement for Matrices}\label{sec:colorrefinement}
While matrix equitability does not explicitly appear in Theorem~\ref{reductiontheorem}, equitability is often required on the data or kernel matrix for convex ERM instantiations of our compression method. Below, we present the algorithm we use for refining a coloring $(\mathcal{P}, \mathcal{Q})$ of a matrix $A$ until it converges to the coarsest equitable partition. This is essentially a weighted case of color refinement for graphs \cite{berkholz}.
\begin{algorithm}[htb!]
\caption{Color refinement for coarsest equitable partition of a matrix}\label{alg:cep}
\begin{algorithmic}[1]
\STATE \textbf{Initialize:} \(\mathcal{P}\) and \(\mathcal{Q}\) as needed, depending on the specific case
\STATE Add all colors in \(\mathcal{P} \cup \mathcal{Q}\) to a stack \(S_\text{refine}\)

\WHILE{$S_\text{refine} \neq \emptyset$ \textbf{and} ($|\mathcal{P}| < m$ \textbf{or} $|\mathcal{Q}| < n$)}
    \STATE $R \gets \textsc{Pop}(S_\text{refine})$
    \IF{$R \in \mathcal{P}$}
        \FORALL{$T \in \mathcal{Q}$}
            \STATE Refine \(\mathcal{Q}\): partition all $j\in T$ by \(\left\{\sum_{i\in R}A_{ij}\right\}\).
        \ENDFOR
        \STATE Push all newly created colors into \(S_\text{refine}\)
    \ELSIF{$R \in \mathcal{Q}$}
        \FORALL{\(S \in\mathcal{P}\)}
            \STATE Refine \(\mathcal{P}\): partition all $i\in S$ by \(\left\{\sum_{j\in R}A_{ij}\right\}\).
        \ENDFOR
        \STATE Push all newly created colors into \(S_\text{refine}\)
    \ENDIF
\ENDWHILE
\STATE Return $(\mathcal{P}, \mathcal{Q})$
\end{algorithmic}
\end{algorithm}

When we split a color $R$ into sub-colors $\mathcal{S}=\{\gamma, \gamma', \gamma'', \dots\}$ (we assume without loss of generality that $|\gamma|\geq|\gamma'|\geq|\gamma''|\geq\dots$), $\gamma$ keeps the $R$ label, while each of $\mathcal{S}\setminus\{\gamma\}$ is assigned a distinct new label. Hence, all sub-colors in $\mathcal{S}\setminus\{\gamma\}$ are pushed into $S_\text{refine}$.

We then analyze the complexity of Algorithm~\ref{alg:cep}.
The first time the color $R$ of some index $1\leq i\leq m$ or $1\leq j\leq n$ is pushed into $S_\text{refine}$, it has size $|R|$. If the color of $i$ or $j$ ever gets pushed into $S_\text{refine}$ again, then at some point, $R$ must have been split, with $i$ or $j$ being assigned to some color in $\mathcal{S}\setminus\{\gamma\}$. Hence, the current color of $i$ or $j$ has size at most $\frac{|R|}{2}$. This implies that each $i$ can be pushed into $S_\text{refine}$ at most $\log_2(m)$ times, and each $j$ can be pushed at most $\log_2(n)$ times. Therefore, 
\vspace{-2mm}
\[\sum_{R\in\mathcal Q\text{ popped}} |R|=O(n\log (n)),
\vspace{-2mm}
\] and 
\[\sum_{R\in\mathcal P\text{ popped}} |R|=O(m\log (m)).
\vspace{-1mm}
\]
When we pop a color $R\in\mathcal{P}$ and process it, we must examine all entries of $A$ whose first index lies in $R$, which is $|R|n$ entries. Hence, the total work over all colors is 
\vspace{-2mm}
\[O\!\left(n\sum_{R\in\mathcal{P}\text{ popped}}|R|\right)=O(mn\log (m)).
\vspace{-2mm}
\] 
Similarly, the total work over all colors in $\mathcal{Q}$ is $O(mn\log (n))$. Combining these, we obtain that:

\begin{proposition}
The complexity of Algorithm~\ref{alg:cep} is 
\vspace{-2mm}
\[O(mn(\log (m)+\log (n))).
\vspace{-3mm}
\]
\end{proposition}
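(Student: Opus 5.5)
The bound follows from a charging argument: we charge every unit of work in Algorithm~\ref{alg:cep} to the pop of a color, and bound how often each index can sit inside a popped color. The total cost splits into the cost of processing popped colors $R\in\mathcal{P}$, the cost of processing popped colors $R\in\mathcal{Q}$, and the bookkeeping for splits and pushes. We bound each part separately.

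\textbf{Cost of a single pop.} When $R\in\mathcal{P}$ is popped, we compute $w_j=\sum_{i\in R}A_{ij}$ for all $j$ by scanning the $|R|\times n$ submatrix. This takes $O(|R|n)$ time.

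For each $T\in\mathcal{Q}$, we then partition $T$ by the values $w_j$. With hashing (or a radix-style grouping) this costs $O(|T|)$, so $O(n)$ over all $T$. Choosing the largest sub-color $\gamma$ to keep the label $R$ is linear in the number of pieces, and pushing the other sub-colors is linear in their total size. The pop therefore costs $O(|R|n+n)=O(|R|n)$, since $|R|\ge 1$. Symmetrically, popping $R\in\mathcal{Q}$ costs $O(|R|m)$.

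\textbf{How often an index is popped.} Next we bound $\sum|R|$ over popped colors. Fix an index $i$. Consider two consecutive times a color containing $i$ is pushed onto $S_\text{refine}$, apart from the initial push. Between them, the color containing $i$ must have been split with $i$ landing in a non-largest piece. By the rule that the largest sub-color $\gamma$ keeps the label, that piece has size at most half the previous one. Since color sizes for $\mathcal{P}$ lie in $[1,m]$, $i$ can be pushed at most $1+\log_2 m$ times.

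A color is popped at most once per push. Hence
\[
\sum_{R\in\mathcal{P}\text{ popped}}|R| \le m(1+\log_2 m) = O(m\log m),
\]
and likewise $\sum_{R\in\mathcal{Q}\text{ popped}}|R| = O(n\log n)$.

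\textbf{Main obstacle.} The delicate step is that a color already on the stack can be split before it is popped. When that happens, the stack entry refers to a label and not to a fixed set. I would handle this by treating the stack as holding labels. A label keeps referring to the largest piece $\gamma$, and all other pieces are pushed as new labels. Then every element of the set eventually popped under a given label has been pushed under that label or one of its descendant labels. The halving argument applies to each index's membership in the pushed labels, so the bound still holds. The initial push of all colors contributes only $O(m+n)$.

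\textbf{Combining.} Summing the per-pop costs gives
\[
O\!\Big(n\sum_{R\in\mathcal{P}}|R| + m\sum_{R\in\mathcal{Q}}|R|\Big) = O(mn\log m + mn\log n).
\]
Initialization and the loop test add only $O(m+n)$. This yields the stated complexity $O(mn(\log(m)+\log(n)))$.
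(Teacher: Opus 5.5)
Your proposal is correct and follows essentially the paper's argument. Because the largest sub-color keeps its label, each row index is pushed at most $O(\log m)$ times and each column index at most $O(\log n)$ times, so charging $|R|n$ work to each popped $R\in\mathcal{P}$ and $|R|m$ to each popped $R\in\mathcal{Q}$ gives $O(mn(\log m+\log n))$. Your extra accounting for the per-pop partitioning overhead and for colors that shrink while still on the stack fills in details the paper leaves implicit.
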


In many ERM settings, $A$ is sparse, meaning most entries are exactly 0 and $A$ is stored by listing only its nonzero entries. In this case, refinement can be implemented by aggregating only nonzero entries with an index in the refining color, so the runtime depends on $\text{nnz}(A)$, the number of nonzero entries in $A$, rather than $mn$.

\section{Applications in Machine Learning}\label{sec:ml}

We now instantiate the general reduction framework from Section~\ref{sec:reductiontheorem}
for several widely used convex ERM models.
In each case, we derive explicit, verifiable conditions under which samples, features,
or coefficients can be aggregated without loss of optimality.
These conditions naturally translate into practical reduction algorithms based on
color refinement applied to the data matrix, labels, and sample weights.

Throughout this section, we emphasize that the resulting reductions are
exact: solutions of the reduced problem lift to optimal solutions of the
original ERM objective.

\subsection{Least Squares Linear Regression and Polynomial Regression}\label{sec:linreg}

We begin with least squares linear regression, which serves as a canonical example
illustrating how the abstract conditions of Section~\ref{sec:reductiontheorem}
translate into concrete algebraic constraints on the data.

Consider the following least squares linear regression problem with $n$ samples and $D$ features: 
\vspace{-2mm}
\[\minimize \ F(w, b) = \|\hat{y}-y\|_2^2,
\vspace{-2mm}
\] where $\hat{y}=Xw+b1_n$ is the prediction ($1_n\in\mathbb{R}^n$ is a vector of ones, so adding $b1_n$ adds the bias term $b\in\mathbb{R}$ to each row/sample of $Xw$), $X\in\mathbb{R}^{n\times D}$ contains the input samples, and $y\in\mathbb{R}^n$ contains the ground truth labels.

\begin{theorem}\label{linregreduction}
A coloring $\mathcal{Q}$ of the coefficients $w$ is a reduction coloring for linear regression if the following conditions are met:
\vspace{-3mm}
\begin{itemize}
    \item Define $\mathcal{P}$ as the coloring given by partitioning all $1\leq i\leq n$ by 
    \vspace{-2mm}
    \[\left\{\left(T, \sum_{j\in T}X_{ij}\right)\;\middle|\; T\in\mathcal{Q}\right\}.
    \vspace{-2mm}
    \] We require that $(\mathcal{P}, \mathcal{Q})$ is equitable on $X$.
    \vspace{-2mm}
    \item If $j_1$ and $j_2$ share a color in $\mathcal{Q}$, then 
    \vspace{-2mm}
    \[\sum_{i=1}^n X_{ij_1}y_i = \sum_{i=1}^n X_{ij_2}y_i.
    \vspace{-2mm}
    \] In other words, $(X^\top y)_{j_1}=(X^\top y)_{j_2}$.
    \vspace{-3mm}
\end{itemize}
\end{theorem}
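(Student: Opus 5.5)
The plan is to verify the reduction-coloring conditions directly from the definition. Linear regression is unconstrained, so $\mathcal{P}$ plays no role in the constraint conditions: there are no $G_i$, and the box domain is all of $\mathbb{R}^{D+1}$. The conditions on $G_i$, $b_i$ and $(l_j,u_j)$ hold vacuously. The bias $b$ is placed in its own singleton color, so we extend $\mathcal{Q}$ by $\{b\}$. The only condition left is the first one: at every color-constant point $(\hat w,\hat b)$, meaning $\hat w_{j_1}=\hat w_{j_2}$ whenever $j_1,j_2$ share a color $T\in\mathcal{Q}$, we need $\partial F/\partial w_{j_1}=\partial F/\partial w_{j_2}$.

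Compute $\nabla_w F = 2X^\top(X\hat w+\hat b 1_n - y)$, which gives
\[
\tfrac12\frac{\partial F}{\partial w_j} = (X^\top X\hat w)_j + \hat b\,(X^\top 1_n)_j - (X^\top y)_j .
\]
We show each of the three terms is constant on every $T\in\mathcal{Q}$. The last term is constant by the second hypothesis.

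For the first term, write $\hat w=\Pi_\mathcal{Q} w'$. Then
\[
(X\hat w)_i=\sum_{T'\in\mathcal{Q}} w'_{T'}\sum_{j\in T'}X_{ij}.
\]
By the definition of $\mathcal{P}$, every $i$ in a color $S\in\mathcal{P}$ has the same row sums $\sum_{j\in T'}X_{ij}$ for all $T'$, so $X\hat w$ is constant on each $S$; call the common value $v_S$. Hence
\[
(X^\top X\hat w)_j=\sum_{S\in\mathcal{P}} v_S\sum_{i\in S}X_{ij}.
\]
Equitability of $(\mathcal{P},\mathcal{Q})$ says that $\sum_{i\in S}X_{ij}$ is constant over $j\in T$, so this term is constant on $T$.

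For the middle term, $(X^\top 1_n)_j=\sum_{S\in\mathcal{P}}\sum_{i\in S}X_{ij}$, which is constant on $T$ by the same column-sum property. The bias coordinate is a singleton color, so it imposes no condition.

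I expect no real obstacle here. The only care points are noting that the bias forms its own color, and that column-sum equitability is what makes both the $X^\top 1_n$ term and the $X^\top X\hat w$ term color-constant. Row-sum equitability is already built into the definition of $\mathcal{P}$, since rows are split according to their $T$-block sums. Combining the three terms, $\partial F/\partial w_j$ is constant on each $T$, so $\mathcal{Q}$, extended by $\{b\}$, is a reduction coloring. Theorem~\ref{reductiontheorem} then applies.
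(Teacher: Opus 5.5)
Your proposal is correct and follows essentially the same route as the paper: compute $\partial F/\partial w_j = 2\sum_i X_{ij}(\hat y_i - y_i)$, use the definition of $\mathcal{P}$ to make the prediction constant on each sample color $S$, and then use column-sum equitability together with the $X^\top y$ hypothesis to get constancy on each $T\in\mathcal{Q}$. The differences are cosmetic. You split the gradient into three separately color-constant pieces and explicitly treat the bias as a singleton color, whereas the paper subtracts the $j_1$ and $j_2$ partials and absorbs $b$ into $\hat y_S$.
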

The proof is given in Appendix~\ref{app:linregproof}. We also verify that the closed-form solution of the reduced problem lifts exactly to a solution of the original regression.

In practice, these conditions can be enforced by initializing $\mathcal{P}$ as the unit coloring and $\mathcal{Q}$ by splitting features $j$ according to $\sum_{i=1}^n X_{ij}y_i$, then computing the coarsest equitable partition of $X$ refining $(\mathcal{P},\mathcal{Q})$.
The reduced regression problem is obtained by projecting
$X' = X \Pi_{\mathcal{Q}}$.

In fact, since each sample $i$ sharing a color in $\mathcal{P}$ has identical features after this operation, we can further merge and weigh these samples by projecting $X'=\Pi_\mathcal{P}^{\text{Scaled}}X\Pi_\mathcal{Q}$ and $y'=\Pi^{\text{Scaled}}_\mathcal{P}y$, with the weights matrix $W'=\Pi_\mathcal{P}^\top\Pi_\mathcal{P}$ being given by $W'_{SS}=|S|$ for all $S\in\mathcal{P}$. This is because having $|S|$ identical samples $x_S$ with possibly different $y$ is equivalent to replacing all $y$ in all such samples with their mean $y'_S$ because the loss function that we are aiming to minimize differs by a constant.

\begin{remark}
    This method can also be applied to polynomial regression and, more generally, to any fixed feature map $\phi(x)$ because these are simply linear regression with an expanded $X$. The coloring conditions imposed on the expanded data matrix are exactly the same as highlighted above. Moreover, our method also applies to weighted linear regression. This is because weighted linear regression is equivalent to transforming the feature matrix $X\mapsto W^{1/2}X$, where $W$ is the diagonal weights matrix, and transforming the ground truth $y\mapsto W^{1/2}y$.
\end{remark}

\subsection{Binary Logistic Regression}
We next consider the weighted binary logistic regression problem
\vspace{-3mm}
\begin{align*}
    \minimize &\ F(w, b) =\\
    &-\sum_{i=1}^n v_i(y_i\log (\hat{y}_i)+(1-y_i)\log (1-\hat{y}_i)),
    \vspace{-4mm}
\end{align*}
where $v\in\mathbb{R}^n$ is the weights vector, $\hat{y}=\sigma(Xw+b1_n)$ is the predicted probability vector, $\sigma$ represents the sigmoid function, and $y\in\{0,1\}^n$ are the ground truth labels.

\begin{theorem}\label{blrreduction}
A coloring $\mathcal{Q}$ of the coefficients $w$ is a reduction coloring for binary logistic regression if the following conditions are met:
\vspace{-3mm}
\begin{itemize}
    \item Define $\mathcal{P}$ as we did in Theorem~\ref{linregreduction}. We require that $(\mathcal{P}, \mathcal{Q})$ is equitable on $X$.
    \vspace{-2mm}
    \item If $j_1$ and $j_2$ share a color in $\mathcal{Q}$, then 
    \vspace{-2mm}
    \[\sum_{i=1}^n v_iX_{ij_1}y_i = \sum_{i=1}^n v_iX_{ij_2}y_i.
    \vspace{-3mm}
    \]
    \vspace{-2mm}
    \item $v_i$ is constant across all $i\in S$ for any $S\in\mathcal{P}$.
    \vspace{-3mm}
\end{itemize}
\end{theorem}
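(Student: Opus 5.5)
We verify the first condition of the reduction coloring definition; the problem is unconstrained and the box domain is all of $\mathbb{R}^{D+1}$, so the constraint and box conditions hold vacuously. The bias $b$ sits in its own singleton color, so there is nothing to check for it. Concretely, we fix $\hat{w}$ constant on each color $T\in\mathcal{Q}$, say $\hat{w}_j = c_T$ for $j\in T$, and any $b$. We must show $\partial F/\partial w_{j_1} = \partial F/\partial w_{j_2}$ at $(\hat w,b)$ whenever $j_1,j_2\in T$.

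The first step is to compute the gradient. Using $\sigma' = \sigma(1-\sigma)$, the standard computation gives
\[\frac{\partial F}{\partial w_j} = \sum_{i=1}^n v_i(\hat{y}_i - y_i)X_{ij} = \sum_{i=1}^n v_i\hat{y}_i X_{ij} - \sum_{i=1}^n v_iX_{ij}y_i.\]
The second sum is constant on each color $T$ by the second hypothesis. So it remains to show that $\sum_i v_i\hat y_i X_{ij}$ is constant over $j\in T$.

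The second step shows that $\hat{y}$ is constant on each color of $\mathcal{P}$. Here $(X\hat w)_i = \sum_{T\in\mathcal{Q}} c_T\sum_{j\in T}X_{ij}$. By the definition of $\mathcal{P}$ (as in Theorem~\ref{linregreduction}), rows in the same $S\in\mathcal{P}$ have identical block sums $\sum_{j\in T}X_{ij}$ for every $T$. Hence $X\hat w + b1_n$, and therefore $\hat y = \sigma(X\hat w+b1_n)$, is constant on $S$; call its value $\hat y_S$. By the third hypothesis, $v_i$ is also constant on $S$; call it $v_S$.

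The third step uses equitability:
\[\sum_{i=1}^n v_i\hat y_i X_{ij} = \sum_{S\in\mathcal{P}} v_S\hat y_S\sum_{i\in S}X_{ij}.\]
Equitability of $(\mathcal{P},\mathcal{Q})$ on $X$ says that $\sum_{i\in S}X_{ij}$ is constant over $j\in T$. So the whole expression is constant over $j\in T$, which completes the verification.

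Combining the three steps, $(\mathcal{P},\mathcal{Q})$ (with $\mathcal{P}$ vacuous for constraints) is a reduction coloring, and Theorem~\ref{reductiontheorem} applies.

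The main subtlety is the second step. The nonlinearity $\sigma$ is harmless only because it acts row-wise on rows whose pre-activations coincide. This requires color-constant $\hat w$, which is exactly the restriction in the definition of a reduction coloring. The weights must also be constant on $S$, because $v$ enters the first sum separately from the labels, and equitability alone would not control a non-constant $v_i$ inside $\sum_{i\in S}$.
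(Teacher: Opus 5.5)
Your proposal is correct and follows essentially the same route as the paper. Both compute $\partial F/\partial w_j=\sum_i v_iX_{ij}(\hat y_i-y_i)$, cancel the label term with the second hypothesis, and use that the logits (hence $\sigma$) and $v_i$ are constant on each $S\in\mathcal{P}$, so that the equitable column sums $\sum_{i\in S}X_{ij}$ finish the argument; your remarks that the bias forms a singleton color and that the constraint and box conditions hold vacuously are details the paper leaves implicit.
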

The proof of Theorem~\ref{blrreduction} is given in Appendix~\ref{app:blrproof}.

Similar to linear regression, we can satisfy all these conditions by initializing $\mathcal{P}$ to split $1\leq i\leq n$ by $v_i$ and $\mathcal{Q}$ to split $1\leq j\leq D$ by $\sum_{i=1}^n v_iX_{ij}y_i$, then finding the coarsest equitable partition of $X$ that refines $(\mathcal{P}, \mathcal{Q})$.

The reduction process for logistic regression is identical to that of linear regression. In $F'$, the summation is over $S\in\mathcal{P}$ instead of $1\leq i\leq n$. For the weights vector, we use $v'=\Pi_\mathcal{P}^\top v$. If we would like to have a binary ground truth, we can alternatively merge samples in each $S\in\mathcal{P}$ into up to two reduced weighted samples, one with label 1 and one with label 0.

\subsection{Multiclass Logistic Regression}
We now extend the reduction to multiclass logistic regression with $K$ classes:
\vspace{-2mm}
\[\minimize \ F(W, b) = -\sum_{i=1}^n v_i\log (\hat{y}_{iy_i}),
\vspace{-2mm}
\] where $W\in\mathbb{R}^{D\times K}$, $b\in\mathbb{R}^K$, $\hat{y}=\softmax(XW+1_nb^\top)$ (where softmax is applied along the class dimension) is the predicted probability matrix, and $y\in\{1, 2, \dots, K\}^n$ are the ground truth labels.
As with before, let $\mathcal{P}$ be a partition of the samples, and let $\mathcal{Q}$ be a partition of the features. We do not reduce the number of classes.
\begin{theorem}\label{mclrreduction}
A reduction coloring for multiclass logistic regression is defined the same way as for binary logistic regression in Theorem~\ref{blrreduction} (including equitability on $X$ and the requirement that $v_i$ is constant across any $S\in\mathcal{P}$). The only difference is that we generalize our requirement of equal $\sum_{i=1}^n v_iX_{ij}y_i$ across $j\in T$ for any $T\in\mathcal{Q}$ by requiring equal 
\vspace{-2mm}
\[\sum_{i=1}^nv_iX_{ij}\cdot 1\{y_i=c\}
\vspace{-2mm}
\] for all $1\leq c\leq K$.
\end{theorem}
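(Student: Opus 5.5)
We verify directly that $(\mathcal{P},\mathcal{Q})$ satisfies the definition of a reduction coloring for the unconstrained program in the variables $(W,b)$. The coloring of the variables is as follows. Each class column $c$ of $W$ is colored by $\mathcal{Q}$: $W_{jc}$ and $W_{j'c}$ share a color iff $j,j'$ share a color $T\in\mathcal{Q}$. Entries in different classes never share a color, and each $b_c$ is its own color. Since there are no constraints $G_i$ and no box restrictions that distinguish coordinates, only the first condition of the definition needs checking. We must show that at every color-constant point $(\hat W,\hat b)$, meaning $\hat W_{jc}=\omega_{Tc}$ for $j\in T$, the partial derivatives $\partial F/\partial W_{j_1c}$ and $\partial F/\partial W_{j_2c}$ agree whenever $j_1,j_2\in T$. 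Theorem~\ref{reductiontheorem} then gives the claim.

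The first step is to show that the logits are constant on sample colors. At a color-constant point,
\[
(X\hat W)_{ic}=\sum_{T\in\mathcal{Q}}\omega_{Tc}\sum_{j\in T}X_{ij}.
\]
By the definition of $\mathcal{P}$, the row sum $\sum_{j\in T}X_{ij}$ depends only on the color $S\ni i$. Hence the logit vector $z_i$, and therefore the softmax vector $\hat y_{i\cdot}$, depends only on $S$; write it $p_S\in\mathbb{R}^K$.

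The second step computes the gradient. The standard softmax cross-entropy derivative gives
\[
\frac{\partial F}{\partial W_{jc}}=\sum_{i=1}^n v_iX_{ij}\bigl(\hat y_{ic}-1\{y_i=c\}\bigr).
\]
We split this into two sums and handle each separately. The label term $\sum_i v_iX_{ij}1\{y_i=c\}$ is constant over $j\in T$ by the hypothesis of the theorem. For the prediction term, we group by sample colors and use that $v_i=v_S$ is constant on $S$:
\[
\sum_{S\in\mathcal{P}}v_S\,(p_S)_c\sum_{i\in S}X_{ij}.
\]
Equitability of $(\mathcal{P},\mathcal{Q})$ on $X$ says that the column sum $\sum_{i\in S}X_{ij}$ is constant over $j\in T$. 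So this term is also constant over $j\in T$, which establishes the first condition. The bias coordinates are singletons and need nothing.

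The main point of care is that the whole argument hinges on the interplay between two facts: constant row sums make the predictions color-constant, and constant column sums together with constant $v$ on $\mathcal{P}$ make the aggregated prediction term color-constant. Equitability supplies both. One should also confirm that $\mathcal{P}$ as defined is genuinely a partition compatible with both requirements, since it is determined by $\mathcal{Q}$ and may need refinement by $v$. This refinement is harmless, because refining $\mathcal{P}$ preserves the constant-row-sum property, and the theorem explicitly assumes equitability of the resulting pair. Finally, we note that $F$ is convex and differentiable everywhere, so the hypotheses of Theorem~\ref{reductiontheorem} hold with an unbounded box.
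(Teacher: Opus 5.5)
Your proposal is correct and follows essentially the same route as the paper. At a color-constant $\hat W$, the softmax outputs are constant on each $S\in\mathcal{P}$ by the row-sum condition. The difference $\partial F/\partial W_{j_1c}-\partial F/\partial W_{j_2c}$ then vanishes because the prediction term is handled by constant $v$ on $S$ together with equal column sums $\sum_{i\in S}X_{ij}$, and the label term by the per-class hypothesis. Your explicit variable coloring (each class column of $W$ colored by $\mathcal{Q}$, each $b_c$ a singleton) only makes precise what the paper leaves implicit.
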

We prove Theorem~\ref{mclrreduction} in Appendix~\ref{app:mclrproof}.

Within each class, the reduction process for multiclass logistic regression is exactly identical to that of binary logistic regression. This means that $W'=\Pi_\mathcal{Q}^{\text{Scaled}}W$, $b'=b$, $v'=\Pi_\mathcal{P}^\top v$, $\hat{y}'=\softmax(X'W'+1_{|\mathcal{P}|}b^\top)$, $X'=\Pi_\mathcal{P}^{\text{Scaled}}X\Pi_\mathcal{Q}$, $y'\in\mathbb{R}^{|\mathcal{P}|\times K}$ is a probability matrix with \[y'_{Sc}=\frac{\sum_{i\in S}1\{y_i=c\}}{|S|},
\vspace{-2mm}
\] and 
\vspace{-2mm}
\[F'(W', b')=-\sum_{S\in\mathcal{P}}v'_S\sum_{c=1}^Ky'_{Sc}\log (\hat{y}'_{Sc}).
\vspace{-2mm}
\] Alternatively, to ensure a binary ground truth, we can merge samples in each $S\in\mathcal{P}$ into up to $K$ reduced weighted samples, one per class.

\subsection{Elastic-Net Regularization}
Consider adding an elastic-net regularization term, 
\vspace{-2mm}
\[\lambda_2\|w\|_2^2+\lambda_1\|w\|_1,
\vspace{-2mm}
\] to the objective function of any case of convex ERM. Theorem~\ref{reductiontheorem} does not apply directly here because the L1 regularization term is not always differentiable. However, we note that the definition of a reduction coloring does not change since this regularization term cannot increase if $w$ is replaced with $\hat{w}$, regardless of $\mathcal{Q}$, by Jensen's inequality. For example, we verify that the closed-form solution for ridge regression is preserved in Appendix~\ref{app:enetproof}.

\subsection{Note on StandardScaler}\label{sec:standardscaler}

For models such as linear and logistic regression, it is often beneficial to apply StandardScaler from scikit-learn \cite{scikit-learn} to improve numerical stability and conditioning. Given a data matrix \(X \in \mathbb{R}^{n \times D}\), StandardScaler computes feature-wise empirical means \(\mu \in \mathbb{R}^D\) and standard deviations \(\sigma \in \mathbb{R}^D\), and transforms each sample \(x \in \mathbb{R}^D\) as 
\vspace{-1mm}
\[x_{\mathrm{sc}} = D_{\sigma}^{-1}(x - \mu),
\vspace{-1mm}
\] where \(D_{\sigma} = \mathrm{diag}(\sigma_1,\dots,\sigma_D)\).

However, we empirically observe that applying StandardScaler prior to computing the coarsest reduction coloring typically produces a strictly finer partition. In other words, the equitable partition of the standardized matrix is usually a refinement of the equitable partition of the original matrix. To preserve equivalence while retaining the numerical benefits of standardization, we proceed as follows:
\vspace{-3mm}
\begin{enumerate}
    \item Compute the reduction coloring on the original data matrix \(X\), producing a partition \((\mathcal{P}, \mathcal{Q})\). Using this coloring, form the reduced matrix $X'$.
    \vspace{-2mm}
    \item Apply StandardScaler to \(X'\). Let \(\mu' \in \mathbb{R}^{|\mathcal{Q}|}\) and \(\sigma' \in \mathbb{R}^{|\mathcal{Q}|}\) denote the feature means and standard deviations of \(X'\). Train the desired model on 
    \vspace{-2mm}
    \[x'_{\mathrm{sc}} = D_{\sigma'}^{-1}(x' - \mu').
    \vspace{-3mm}
    \] For example, in logistic regression, the learned parameters satisfy 
    \vspace{-2mm}
    \[z = (w'_{\mathrm{sc}})^\top x'_{\mathrm{sc}} + b'_{\mathrm{sc}}.
    \vspace{-4mm}
    \]
    \vspace{-2mm}
    \item Convert the fitted parameters back to the (unscaled) reduced feature space. Substituting 
    \vspace{-2mm}
    \[x'_{\mathrm{sc}} = D_{\sigma'}^{-1}(x' - \mu'),
    \vspace{-3mm}
    \] we obtain
    \vspace{-2mm}
    \begin{align*}
        z &=(w'_{\mathrm{sc}})^\top D_{\sigma'}^{-1}(x' - \mu') + b'_{\mathrm{sc}} \\
        &= (D_{\sigma'}^{-1} w'_{\mathrm{sc}})^\top x' + \left(b_{\mathrm{sc}}' - (D_{\sigma'}^{-1} w'_{\mathrm{sc}})^\top \mu' \right).
        \vspace{-2mm}
    \end{align*}
    Hence, the parameters in the unstandardized reduced space are 
    \vspace{-2mm}
    \[w' = D_{\sigma'}^{-1} w'_{\mathrm{sc}}, \qquad b' = b'_{\mathrm{sc}} - (w')^\top \mu'.
    \vspace{-3mm}
    \]
    \item Lift \(w'\) to the original feature space using the column partition \(\mathcal{Q}\). The intercept remains \(b = b'\).
    \vspace{-4mm}
\end{enumerate}

At inference time, predictions may be computed directly on the original (unstandardized) feature vectors \(x\) using \(
z = w^\top x + b.\) No additional centering or normalization is required, since the effect of standardization has already been absorbed into the transformed parameters \(w\) and \(b\).
This procedure ensures that training on the standardized reduced data is exactly equivalent (up to solver tolerance) to training on a standardized version of the original data under the reduction induced by \((\mathcal{P}, \mathcal{Q})\).

\subsection{Kernel-Based Models}
\vspace{-1mm}

Finally, we consider kernel methods, where learning operates only on pairwise
inner products $k(x_i,x_j)$ rather than explicit features. 

After applying the standard representer theorem-based reduction, training reduces to a convex program in coefficients $\alpha\in\mathbb{R}^n$ whose structure is determined by the symmetric positive semidefinite kernel matrix $K_{ij}=k(x_i,x_j)$ with $K\in\mathbb{R}^{n\times n}$. Theorem~\ref{reductiontheorem} applies directly to this finite problem. Here, we color $\alpha$ by $\mathcal{Q}$.

\subsubsection{Kernel Ridge Regression}\label{sec:kernelridge}
Training of weighted kernel ridge regression reduces to
\vspace{-2mm}
\begin{align*}
    \minimize \ F(\alpha, b) =
    &(K\alpha + b1_n - y)^\top W(K\alpha + b1_n - y)\\
    &\qquad+ \lambda\alpha^\top K \alpha,
\end{align*}
\vspace{-10mm}

where $y\in\mathbb{R}^n$ is the ground truth and $W=\text{diag}(v)$ for sample weights $v\in\mathbb{R}^n$.

\begin{theorem}\label{kernellinregreduction}
A coloring $\mathcal{Q}$ of $\alpha$ is a reduction coloring for kernel ridge regression under the following conditions:
\vspace{-3mm}
\begin{itemize}
    \item $(\mathcal{Q}, \mathcal{Q})$ is equitable on $K$.
    \vspace{-2mm}
    \item For each $T\in\mathcal{Q}$, $\sum_{i=1}^n v_iK_{ij}y_i$ is constant across $j\in T$.
    \vspace{-2mm}
    \item $v_i$ is constant across all $i\in T$ for any $T\in\mathcal{Q}$.
    \vspace{-3mm}
\end{itemize}
\end{theorem}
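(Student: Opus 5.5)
The plan is to verify directly that $\mathcal{Q}$, together with the trivial (empty) constraint coloring, satisfies the definition of a reduction coloring. Then Theorem~\ref{reductiontheorem} gives the exact correspondence of optima. The program has no constraints $G_i$, and the box domain is all of $\mathbb{R}^{n+1}$. So the only conditions to check are the box condition, which is vacuous, and equality of partial derivatives of $F$ within each color. The bias $b$ is treated as its own singleton color, so it imposes nothing. I also note that $F$ is convex and differentiable, since $K \succeq 0$ and $W=\mathrm{diag}(v)$ with $v\ge 0$ (nonnegative weights are assumed, as usual). Hence Theorem~\ref{reductiontheorem} is applicable.

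Next I compute the gradient with respect to $\alpha$. Using the symmetry of $K$,
\[
\nabla_\alpha F = 2KW(K\alpha + b1_n - y) + 2\lambda K\alpha .
\]
Fix a color-constant $\hat\alpha=\Pi_\mathcal{Q}\alpha'$. I will show that each of the three vectors $KWK\hat\alpha + bKW1_n$, $KWy$ and $K\hat\alpha$ is constant on every $T\in\mathcal{Q}$.

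\textbf{Step 1: $K\hat\alpha$ is color-constant.} We have $(K\hat\alpha)_i=\sum_{T}\alpha'_T\sum_{j\in T}K_{ij}$. By equitability of $(\mathcal{Q},\mathcal{Q})$, the inner sum depends only on the color of $i$. This already handles the regularization term.

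\textbf{Step 2: the residual term is color-constant.} The vector $r=K\hat\alpha+b1_n$ is color-constant by Step 1. Since $v$ is constant on colors (third hypothesis), $Wr$ is color-constant too. Then $(KWr)_j=\sum_T (Wr)_T\sum_{i\in T}K_{ij}$. This is constant over $j$ in a common color, by the column-sum half of equitability (or equivalently the row-sum half together with $K=K^\top$).

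\textbf{Step 3: the label term is color-constant.} We have $(KWy)_j=\sum_i K_{ji}v_iy_i=\sum_i v_iK_{ij}y_i$ by symmetry. This is constant on each $T$ exactly by the second hypothesis.

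Combining Steps 1--3, $\partial F/\partial\alpha_{j_1}=\partial F/\partial\alpha_{j_2}$ at $\hat\alpha$ whenever $j_1,j_2$ share a color. So $\mathcal{Q}$ is a reduction coloring, and Theorem~\ref{reductiontheorem} yields the claim.

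There is no deep obstacle here; the step requiring the most care is Step 2. There one must observe that the weighting $W$ is sandwiched between two applications of $K$, so color-constancy must survive the multiplication by $W$. This is precisely why the hypothesis on $v$ is needed: without it, $Wr$ would fail to be color-constant, and the column-sum argument would break. I would also remark on the form of the reduced objective. It is obtained by substituting $K\Pi_\mathcal{Q}$, which has $|\mathcal{Q}|$ columns and rows constant on colors, so rows can further be merged with multiplicities as in Section~\ref{sec:linreg}. The quadratic term becomes $\alpha'^\top\Pi_\mathcal{Q}^\top K\Pi_\mathcal{Q}\alpha'$, consistent with the QP reduction of Section~\ref{subsec:qps}.
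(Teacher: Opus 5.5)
Your proof is correct and follows essentially the same route as the paper's. Both compute $\nabla_\alpha F$, use equitability to show $K\hat\alpha$ is color-constant, combine constancy of $v$ on colors with the column-sum half of equitability for the residual term, and apply the second hypothesis to the label term; you also make explicit two points the paper leaves implicit, namely that the bias is a singleton color and that convexity needs $v\ge 0$.
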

Note that this is nearly identical to the conditions for primal linear regression under Theorem~\ref{linregreduction} and, by extension, primal ridge regression. We prove Theorem~\ref{kernellinregreduction} in Appendix~\ref{kernellinregreductionproof}.
We can express the reduced problem using $K'=\Pi_\mathcal{Q}^\top K\Pi_\mathcal{Q}$, replacing $1_n$ with $1_{|\mathcal{Q}|}$, $y'=\Pi_\mathcal{Q}^{\text{Scaled}}y$ (which, by similar logic as in Section~\ref{sec:linreg}, preserves the objective function up to shifting by a constant), and $W'=\Pi_\mathcal{Q}^\top W\Pi_\mathcal{Q}$ since $v'=\Pi_\mathcal{Q}^\top v$.

\subsubsection{Kernel Binary Logistic Regression}
\vspace{-1mm}

With binary labels $y\in\{0,1\}^n$, the optimization problem for kernel logistic regression is
\vspace{-2mm}
\begin{align*}
    \minimize &\ F(\alpha, b)=\\
    &-\left(\sum_{i=1}^n v_i\left(y_i \log \hat y_i + (1-y_i)\log(1-\hat y_i)\right)\right)\\
    &\qquad+ \lambda\alpha^\top K \alpha,
\end{align*}
\vspace{-9mm}

where $\hat y = \sigma(K\alpha + b1_n)$ and $\sigma$ is the sigmoid function.

\begin{theorem}\label{kernelblrreduction}
A coloring $\mathcal{Q}$ is a reduction coloring for kernel binary logistic regression under the exact same conditions as for kernel ridge regression from Theorem~\ref{kernellinregreduction}.
\end{theorem}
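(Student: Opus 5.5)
Because this problem has no constraints, the plan is to check only the first condition of a reduction coloring (plus the trivial box condition). The variables are $(\alpha,b)$. The bias $b$ sits in its own singleton color, so it imposes nothing. What has to be shown is this: for every color-constant $\hat\alpha$ (that is, $\hat\alpha=\Pi_\mathcal{Q}a$ for some $a$) and every $b$, the partial derivative $\partial F/\partial\alpha_j$ is the same for all $j$ in a given $T\in\mathcal{Q}$.

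First I will compute the gradient. Put $z=K\alpha+b1_n$. Using $\partial/\partial z_i$ of the weighted cross-entropy $=v_i(\sigma(z_i)-y_i)$ and the symmetry of $K$, we get
\[
\nabla_\alpha F = K\bigl(W\sigma(z)\bigr) - KWy + 2\lambda K\alpha, \qquad W=\mathrm{diag}(v).
\]
I will treat the three terms one at a time. The term $KWy$ has $j$-th entry $\sum_i v_iK_{ij}y_i$, which is constant on each $T$ by the second hypothesis. For the term $2\lambda K\alpha$, equitability of $(\mathcal{Q},\mathcal{Q})$ on $K$ gives $K\Pi_\mathcal{Q}=\Pi_\mathcal{Q}B$, where $B_{TT'}$ is the common row sum $\sum_{j'\in T'}K_{jj'}$ for $j\in T$. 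So $K\hat\alpha=\Pi_\mathcal{Q}Ba$ is color-constant. This is the same step as in the quadratic term of Theorem~\ref{kernellinregreduction}.

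The main step is the nonlinear term. From the computation just done, $z=\Pi_\mathcal{Q}Ba+b1_n$ is color-constant in $i$ with respect to $\mathcal{Q}$, so $\sigma(z)$ is color-constant as well. By the third hypothesis $v$ is constant on each color, so the vector $u=W\sigma(z)$ is color-constant too, say $u=\Pi_\mathcal{Q}c$. Then $Ku=\Pi_\mathcal{Q}Bc$ is again color-constant. Adding the three terms shows that $\partial F/\partial\alpha_j$ is constant on each $T$. The place that needs care, and that I expect to be the only real obstacle, is that the argument uses the row-sum half of equitability twice: once to push $\hat\alpha$ through $K$, and once to push $W\sigma(z)$ through $K$ on the left, where symmetry of $K$ lets us use the same row-sum property. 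The sample weights also have to be color-constant so that multiplying by $W$ keeps color-constancy. Both facts are among the hypotheses.

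Finally, $F$ is convex and differentiable: it is a composition of the convex logistic loss with an affine map, plus $\lambda\alpha^\top K\alpha$ with $K\succeq 0$ and $\lambda\ge 0$. Since there are no constraints and the box condition is vacuous, $\mathcal{Q}$ (together with a singleton color for $b$) is a reduction coloring. Theorem~\ref{reductiontheorem} then applies, and the reduced problem can be written with $K'=\Pi_\mathcal{Q}^\top K\Pi_\mathcal{Q}$ and $v'=\Pi_\mathcal{Q}^\top v$, exactly as in the ridge case.
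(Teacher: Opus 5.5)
Your proof is correct and is essentially the paper's argument. You split $\partial F/\partial\alpha_j$ into the sigmoid, label, and regularizer terms, then use color-constancy of $K\hat\alpha$ (hence of the logits and of $v_i\sigma(z_i)$) together with equitability of $(\mathcal{Q},\mathcal{Q})$ on $K$ to show each term is constant on colors; your identity $K\Pi_\mathcal{Q}=\Pi_\mathcal{Q}B$ is just the matrix form of the paper's manipulations with the column sums $\sum_{i\in T}K_{ij}$, and treating the bias as a singleton color and checking convexity are details the paper leaves implicit.
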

This will be proved in Appendix~\ref{kernelblrreductionproof}. The reduced problem can be expressed in the same way as kernel ridge regression in Section~\ref{sec:kernelridge}.

To avoid fractional values in the ground truth, we can alternatively merge samples in each $T\in\mathcal{Q}$ into up to two reduced weighted samples, one with each label. In particular, each $T\in\mathcal{Q}$ is split into up to two subsets $T^{(0)}=\{i\in T\mid y_i=0\}$ and $T^{(1)}=\{i\in T\mid y_i=1\}$. The resulting partition is denoted by $\mathcal{P}$. In the reduced problem, the quadratic regularization term depends only on the projected coefficients and therefore uses the symmetrically projected matrix $K'_\text{quad}=\Pi_\mathcal{Q}^\top K\Pi_\mathcal{Q}$. The prediction term, however, depends on the logits $z=K\alpha+b1_n$. After aggregating identical samples with identical labels by $\mathcal{P}$, the logits are determined by the row-aggregated matrix 
\vspace{-2mm}
\[K'_\text{logit}=\Pi_\mathcal{P}^{\text{Scaled}}K\Pi_\mathcal{Q}.
\vspace{-2mm}
\] Thus, the reduced objective uses two derived matrices: one symmetric matrix for the quadratic regularization term and one matrix with a potentially different row aggregation procedure for the loss. This asymmetry reflects the fact that the regularizer depends only on $\alpha$, whereas the loss depends on logits indexed by training examples. Hence, the reduced objective can be written as follows:
\vspace{-2mm}
\begin{align}
    \minimize &\ F'(\alpha', b')=\nonumber\\
    &-\left(\sum_{S\in\mathcal{P}} v'_S\left(y'_S \log \hat{y}'_S + (1-y'_S)\log(1-\hat{y}'_S)\right)\right)\nonumber\\
    &\qquad+ \lambda\alpha'^\top K'_\text{quad} \alpha',\nonumber
\end{align}
\vspace{-8mm}

where $v'=\Pi_\mathcal{P}^\top v$, $y'=\Pi_\mathcal{P}^{\text{Scaled}}y$, and $\hat{y}'=\sigma(K'_\text{logit}\alpha'+b1_{|\mathcal{P}|})$.

We note that while the reduced problem is a smaller convex program equivalent to the original, it is not necessarily expressible again in standard kernel form.

\section{Experiments}\label{sec:experiments}

We empirically evaluate our exact reduction framework on binary logistic
regression.
The experiments are designed to answer three questions:
(i) how much instance and feature compression can be achieved in practice,
(ii) whether the reduction overhead is outweighed by solver speedups, and
(iii) how these effects vary across datasets with different scales and sparsity
patterns.
As predicted by theory, all reduced problems yield solutions that are equivalent
to those obtained from the original datasets.

\subsection{Datasets}

We evaluate our method on one dataset from OpenML and four standard benchmark
datasets from LIBSVM, covering a range of sample sizes, feature dimensions,
and data characteristics.

We use the Titanic dataset accessed via the OpenML Python API \citep{openml}.
The following preprocessing steps are applied:
\vspace{-3mm}
\begin{itemize}
    \item Categorical features are transformed using one-hot encoding, with the
    first category dropped to avoid redundancy.
    \vspace{-2mm}
    \item Missing categorical values are treated as separate categories.
    \vspace{-2mm}
    \item Missing numerical values are imputed using mean imputation.
    \vspace{-2mm}
    \item Identifying or high-cardinality features (name, ticket, cabin, boat,
    and body) are removed.
    \vspace{-3mm}
\end{itemize}

We also evaluate on the following binary classification datasets from
LIBSVM \citep{libsvm}:
skin\_nonskin, phishing, a7a, and
breast-cancer.
These datasets are commonly used to benchmark large-scale convex optimization
methods and provide a diverse testbed for evaluating reduction behavior.

\subsection{Experimental Setup}

All reduction procedures are implemented in C++, while model training
is performed using scikit-learn \citep{scikit-learn}.
We apply StandardScaler prior to training to improve numerical stability;
the exact parameter transformation ensuring equivalence between reduced and
original problems is described in Section~\ref{sec:standardscaler}.

For each dataset, we measure:
\vspace{-2mm}
\begin{itemize}
    \vspace{-2mm}
    \item the number of samples and features retained after reduction,
    \vspace{-2mm}
    \item and the total runtime, defined as the sum of reduction time and training
    time on the reduced dataset.
    \vspace{-3mm}
\end{itemize}
To facilitate comparison across datasets, all runtimes are normalized by the
training time of logistic regression on the original (unreduced) dataset.
Predictive performance is identical up to solver tolerance and is therefore not reported separately.

\subsection{Results}

We first examine the extent of sample and feature compression achieved by exact reduction in Figure~\ref{fig:sample-feature-reduction}.
\vspace{-3mm}
\begin{figure}[H]
  \centering
  \includegraphics[width=\columnwidth]{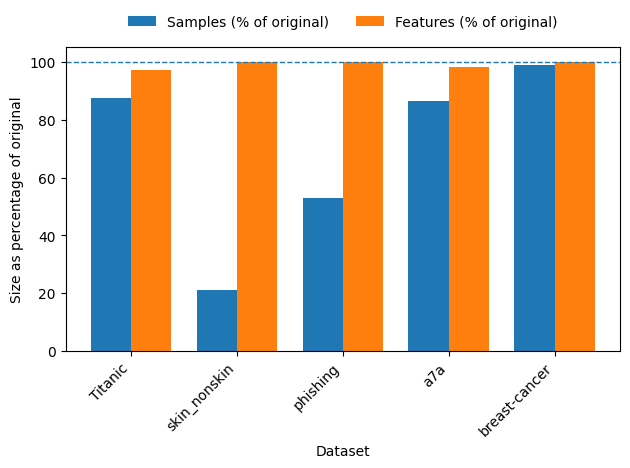}
  \caption{Sample and feature compression percentage by dataset.}
  \label{fig:sample-feature-reduction}
  \vspace{-5mm}
\end{figure}
Across all datasets, we observe compression in number of features or samples, or reduction in both features and samples.
These results highlight the prevalence of redundancies in real-world datasets that are directly exploitable by our framework.

We next consider computational efficiency.
Figure~\ref{fig:runtime} reports the total runtime with reduction, expressed as a
percentage of the baseline training runtime on the original dataset.

\vspace{-5mm}
\begin{figure}[H]
  \vskip 0.2in
  \begin{center}
    \centerline{\includegraphics[width=\columnwidth]{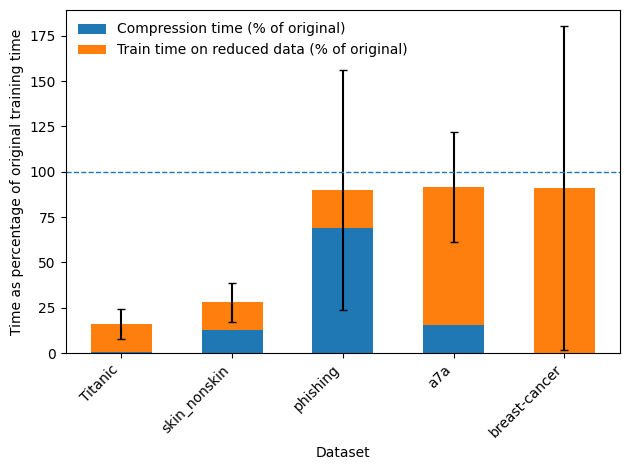}}
    \caption{
      Runtime compression percentage by dataset. Error bars denote the standard deviation over 50 independent trials.
    }
    \label{fig:runtime}
  \end{center}
  \vspace{-10mm}
\end{figure}

In most cases, the overhead incurred by color refinement is more than offset by
the reduced cost of solving a smaller optimization problem.
As a result, exact reduction leads to meaningful end-to-end speedups, even when
accounting for preprocessing time. Overall, these experiments demonstrate that our reduction framework provides tangible computational benefits
in practical machine learning pipelines, especially for large-scale convex ERM.

\section{Conclusion}\label{sec:conclusion}
We proposed an exact reduction framework for convex empirical risk minimization
based on equitable partitions and color refinement, showing that a broad class of
models, including linear and logistic regression, elastic-net regularization, and
kernel methods, admit nontrivial instance and feature aggregation without loss of
optimality.
The framework provides explicit, verifiable reduction conditions and a procedure for compression.
Experiments demonstrate that such exact reductions are not merely theoretical but
can reduce problem size and improve end-to-end training time on
real-world datasets.

The primary limitation of our work is that the effectiveness of reduction depends heavily on the extent of redundancies within a given dataset. Additionally, explicit compression algorithms are only feasible for optimization problems with known structure. Future work includes improving the efficiency of color refinement for equitability and applying our reduction theorem to more classes of convex empirical risk minimization.




\section{Acknowledgments}
We would like to thank the MIT Program for Research in Mathematics, Engineering, and Science (PRIMES-USA) for making this project possible. The work of Z. Chen is supported in part by the National Science Foundation via grant DMS-2509011.

\bibliography{references}
\bibliographystyle{icml2026}

\newpage

\appendix
\onecolumn
\section{Proof of Reduction Theorem}\label{app:reductiontheoremproof}
In this proof, we will occasionally use color-based indexing in cases where all elements of a color are known to be equal. For example, we might say $x_T$ for $T\in\mathcal{Q}$ if it is known that $x_j$ is equal for all $j\in T$.

\begin{proof}[Proof of Theorem~\ref{reductiontheorem}]
Suppose that $(\mathcal{P}, \mathcal{Q})$ is a reduction coloring. Let $\hat{x}=\Pi_\mathcal{Q}\Pi_\mathcal{Q}^{\text{Scaled}} x$. Note that if $j_1\in T$ and $T\in\mathcal{Q}$, then \[\hat{x}_{j_1}=\frac{\sum_{j_2\in T}x_{j_2}}{|T|}.\] We prove our reduction theorem in three steps:
\begin{enumerate}
    \item The objective function does not increase when $x$ is changed to $\hat{x}$.
    \item If $x$ is feasible, $\hat{x}$ must be too.
    \item The above statements imply equivalence of the original and reduced problems.
\end{enumerate}

Firstly, we show that $F(\hat{x})\leq F(x)$ using the Lagrangian, inspired by \cite{chen}. Suppose that $x$ is fixed, and we wish to minimize $F(z)$ such that \[\sum_{j\in T}z_j=\sum_{j\in T}x_j\] for all $T\in\mathcal{Q}$. It suffices to show that $z=\hat{x}$ is a minimum, and hence, since the objective is convex and it is obvious that $z=\hat{x}$ satisfies our condition, that the partial derivative of the Lagrangian \[L=F(z)-\sum_{T\in\mathcal{Q}}\lambda_T\left(\left(\sum_{j\in T}z_j\right)-\sum_{j\in T}x_j\right)\] with respect to each $z_{j_s}$ is 0 if $z=\hat{x}$. We have that \[\frac{\partial L}{\partial z_{j}}=\frac{\partial F}{\partial z_j}-\lambda_T\]
if $j\in T$. Then our claim is true if we have equal $\left.\frac{\partial F}{\partial z_j}\right|_{z=\hat{x}}$ for all $j$ in the same color $T$, which is one of our reduction coloring conditions.

Secondly, we show that $\hat{x}$ is feasible. Due to our requirement that each $T\in\mathcal{Q}$ has the same $(l_j, u_j)$ for all $j\in T$, we immediately have that $\hat{x}$ is in the box domain. Additionally, we know that \[\sum_{i\in S}G_i(\hat{x})\leq\sum_{i\in S}G_i(x)\leq |S|b_S.\] The first inequality follows by applying the same equality-constrained convex minimization argument to the convex function $\sum_{i\in S}G_i$ using the gradient condition from the reduction coloring for $\sum_{i\in S}G_i$. The second inequality is true by assumption and due to our requirement that $b_i$ is equal for all $i\in S$. In fact, $G_i(\hat{x})\leq b_i$ for all individual $1\leq i\leq m$ because we require that $G_i(\hat{x})$ is equal for all $i\in S$.

Finally, we show that the above implies optimality of $x'$ on the reduced problem. Since $F'$ is given simply by replacing each $x_j$ in $F$ with $x_T$ where $j\in T$, we have that \[F'(x')=F(\hat{x}),\]
so if $x'$ is not optimal, then $\hat{x}$ and hence $x$ is not optimal, which contradicts our assumption. We can use similar logic with the reduced constraints: $G'(x')=G(\hat{x})$, so $x'$ is feasible too.

For the reverse direction, note that $\hat{x}=\Pi_\mathcal{Q}x'$. Hence, if $x'$ is optimal, it is impossible for $\hat{x}$ to not be optimal. Otherwise, there exists some feasible $y$ with $F(y)<F(\hat{x})\implies F'(y')<F'(x')$, contradiction.
\end{proof}

\begin{remark}
    To see why the reduced objective and constraints are convex, note that every reduced expression is given by a scaled intersection of the original curve and the affine space given by setting all $x_j$ equal for all $j\in T$ and any fixed $T\in\mathcal{Q}$. If a function is convex over a convex set, then it is also convex over any affine subspace of this set \cite{loewen}, showing that the reduced expressions are convex.
\end{remark}

\section{Proof and Example of Reduction Coloring Coarseness Compared to Permutation Invariance}\label{app:coarsenesstheoremproof}

\begin{proof}[Proof of Theorem~\ref{coarsenesstheorem}]
Consider any two variables $j_1,j_2$ in the same orbit under $\Gamma$. Then there exists some $(\pi,\sigma)\in\Gamma$ such that $\pi(j_1)=j_2$. Let $\hat{x}$ be any point in the box domain that is constant within each $T\in\mathcal{Q}_\Gamma$, meaning $\pi(\hat{x})=\hat{x}$.

Since $F(\pi(x))=F(x)$ for all $x$ in the box domain, viewing $\pi$ as the permutation matrix $P_\pi$ (so $\pi(x)=P_\pi x$) and differentiating gives, by the chain rule, \[P_\pi^\top \nabla F(P_\pi x)=\nabla F(x)\] for all such $x$. Evaluating at $x=\hat{x}$ and using $P_\pi\hat{x}=\hat{x}$ yields \[P_\pi^\top \nabla F(\hat{x})=\nabla F(\hat{x}),\] which is equivalent to $\nabla F(\hat{x})=P_\pi \nabla F(\hat{x})$ since $P_\pi^{-1}=P_\pi^\top$ for any $\pi$. Therefore, $\big(\nabla F(\hat{x})\big)_{j}=\big(\nabla F(\hat{x})\big)_{\pi(j)}$ for all $j$, so \[\left.\frac{\partial F}{\partial x_{j_1}}\right|_{x=\hat{x}}=\left.\frac{\partial F}{\partial x_{j_2}}\right|_{x=\hat{x}}.\]

For constraints, fix any constraint orbit $S\in\mathcal{P}_\Gamma$. We claim that $\sum_{i\in S}G_i(\pi(x))=\sum_{i\in S}G_i(x)$ for all $x$ in the box domain. By the definition of $\Gamma$, we have $G_{\sigma(i)}(\pi(x))=G_i(x)$ for some permutation $\sigma$ for all $1\leq i\leq m$, so summing over $i\in S$ gives \[\sum_{i\in S} G_{\sigma(i)}(\pi(x))=\sum_{i\in S}G_i(x).\] Since $S$ is an orbit, it is $\Gamma$-invariant. Hence, $\sigma(S)=S$, and $\sigma$ restricts to a bijection on $S$. Thus, $\sum_{i\in S}G_{\sigma(i)}(\pi(x))$ is simply a reindexing: \[\sum_{i\in S} G_{\sigma(i)}(\pi(x))=\sum_{i\in S}G_i(\pi(x))=\sum_{i\in S}G_i(x).\] Using the same logic as above, this implies that \[\left.\frac{\partial}{\partial x_{j_1}}\left(\sum_{i\in S}G_i\right)\right|_{x=\hat{x}}=\left.\frac{\partial}{\partial x_{j_2}}\left(\sum_{i\in S}G_i\right)\right|_{x=\hat{x}}.\] Finally, by definition of $\Gamma$, constraints in the same orbit satisfy $b_{\sigma(i)}=b_i$ and $G_{\sigma(i)}(\pi(x))=G_i(x)$. Evaluating at $x=\hat{x}$ with $\pi(\hat{x})=\hat{x}$ implies that $G_i(\hat{x})$ and $b_i$ are both constant within each $S\in\mathcal{P}_\Gamma$. Moreover, $\pi(l)=l$ and $\pi(u)=u$ imply that $(l_j,u_j)$ is constant within each $T\in\mathcal{Q}_\Gamma$. Therefore, $(\mathcal{P}_\Gamma,\mathcal{Q}_\Gamma)$ is a reduction coloring, so the coarsest reduction coloring $(\mathcal{P},\mathcal{Q})$ must be at least as coarse as $(\mathcal{P}_\Gamma,\mathcal{Q}_\Gamma)$.
\end{proof}

Now, we provide an example where Theorem~\ref{reductiontheorem} can be used for compression even when no nontrivial permutation symmetry exists. Consider an unregularized linear regression problem on the following data: \[X=\begin{bmatrix} 3 & 1 & 0 \\ 1 & 3 & 0 \\ 4 & 2 & 2 \\ 2 & 4 & 2 \\ 3 & 3 & 1 \end{bmatrix}, \qquad y=\begin{bmatrix} 0 \\ 1 \\ 1 \\ 0 \\ 7 \end{bmatrix}.\] There is clearly no pair of permutations $(\pi, \sigma)$ that preserves the data. For example, swapping the first and second features sends the sample $(3, 1, 0)$ with label 0 to $(1, 3, 0)$, which occurs only with label 1, so no sample permutation can restore equality. However, $\mathcal{P}=\{\{1, 2\}, \{3, 4\}, \{5\}\}$ and $\mathcal{Q}=\{\{1, 2\}, \{3\}\}$ is a reduction coloring. Thus, the problem is equivalent to one with the following compressed data: \[X'=\begin{bmatrix} 4 & 0 \\ 6 & 2 \\ 6 & 1 \end{bmatrix}, \qquad y'=\begin{bmatrix} 0.5 \\ 0.5 \\ 7 \end{bmatrix},\] with \[W'=\begin{bmatrix} 2 & 0 & 0 \\ 0 & 2 & 0 \\ 0 & 0 & 1 \end{bmatrix}.\] We have that $w=\begin{bmatrix} 6.5 \\ 6.5 \\ -6.5 \end{bmatrix}$, $b=b'=-25.5$, and $w'=\begin{bmatrix} 6.5 \\ -6.5 \end{bmatrix}$, as expected.

\section{Proofs and Examples for Section~\ref{sec:ml}}
\subsection{Proof of Reduction for Linear Regression and Preservation of Closed-Form}\label{app:linregproof}
\begin{proof}[Proof of Theorem~\ref{linregreduction}]
At some point $\hat{w}$ with $\hat{w}_{j_1}=\hat{w}_{j_2}$ if $j_1$ and $j_2$ share a color in $\mathcal{Q}$,
\begin{align*}
    \left.\frac{\partial F}{\partial w_j}\right|_{w=\hat{w}} &= 2\sum_{i=1}^nX_{ij}(\hat{y}_i - y_i)\\
    &=2\sum_{i=1}^n X_{ij}\left(\left(b+\sum_{k=1}^D X_{ik}w_k\right) - y_i\right)\\
    &=2\sum_{i=1}^n X_{ij}\left(\left(b+\sum_{T\in\mathcal{Q}}w_T\sum_{k\in T} X_{ik}\right) - y_i\right),
\end{align*}
where we use $w_T$ to denote the constant value of $w_j$ for all $j\in T$.

Under our reduction conditions, for any $T\in\mathcal{Q}$ and $j_1,j_2\in T$,
\begin{align*}
    \left(\left.\frac{\partial F}{\partial w_{j_1}}\right|_{w=\hat{w}}\right)-\left(\left.\frac{\partial F}{\partial w_{j_2}}\right|_{w=\hat{w}}\right)&=2\sum_{i=1}^n (X_{ij_1}-X_{ij_2})\left(\left(b+\sum_{T\in\mathcal{Q}}w_T\sum_{k\in T} X_{ik}\right) - y_i\right)\\
    &=2\sum_{i=1}^n (X_{ij_1}-X_{ij_2})\left(b+\sum_{T\in\mathcal{Q}}w_T\sum_{k\in T}X_{ik}\right)-\left(\sum_{i=1}^n X_{ij_1}y_i-X_{ij_2}y_i\right) \\
    &=2\sum_{i=1}^n (X_{ij_1}-X_{ij_2})\left(b+\sum_{T\in\mathcal{Q}}w_T\sum_{k\in T}X_{ik}\right).
\end{align*}
Due to equitability of $(\mathcal{P}, \mathcal{Q})$ on $X$, we have constant $\sum_{j\in T}X_{ij}$ for all $i\in S$ and constant $\sum_{i\in S}X_{ij}$ for all $j\in T$ if $S\in\mathcal{P}$ and $T\in\mathcal{Q}$. Hence, the prediction for each sample, \[\hat{y}_i=b+\sum_{T\in\mathcal{Q}}w_T\sum_{j\in T}X_{ij},\] is equal for all $i$ that share a color $S\in\mathcal{P}$, and we can denote this value as $\hat{y}_S$. Using this, we can further simplify:
\begin{align*}
    \sum_{i=1}^n (X_{ij_1}-X_{ij_2})\left(b+\sum_{T\in\mathcal{Q}}w_T\sum_{k\in T}X_{ik}\right)&= \sum_{S\in\mathcal{P}}\hat{y}_S\left(\left(\sum_{i\in S}X_{ij_1}\right)-\sum_{i\in S}X_{ij_2}\right)\\
    &= \sum_{S\in\mathcal{P}}\hat{y}_S\cdot 0\\
    &=0.
\end{align*}
Hence, $\left.\frac{\partial F}{\partial w_{j_1}}\right|_{w=\hat{w}}=\left.\frac{\partial F}{\partial w_{j_2}}\right|_{w=\hat{w}}$ for all $j_1,j_2\in T$.
\end{proof}

We can further verify that the closed-form optimal $w$, \[w=\left(X^\top WX\right)^{-1}X^\top Wy,\] is preserved. (Note that we assume $W=I$ in the original problem. Additionally, we assume $X$ has been augmented with a column of ones so that $w$ includes a bias term.) In the reduced problem, this formula becomes 
\begin{align*}
    w'&=\left((X')^\top W'X'\right)^{-1}(X')^\top W'\Pi^{\text{Scaled}}_\mathcal{P}y\\
    &=\left(\Pi_\mathcal{Q}^\top X^\top \left(\Pi_\mathcal{P}^{\text{Scaled}}\right)^\top W'\Pi_\mathcal{P}^{\text{Scaled}}X\Pi_\mathcal{Q}\right)^{-1}\Pi_\mathcal{Q}^\top X^\top \left(\Pi_\mathcal{P}^{\text{Scaled}}\right)^\top W'\Pi^{\text{Scaled}}_\mathcal{P}y,
\end{align*}
which implies that \[\Pi_\mathcal{Q}^\top X^\top \left(\Pi_\mathcal{P}^{\text{Scaled}}\right)^\top \Pi_\mathcal{P}^\top\Pi_\mathcal{P}\Pi_\mathcal{P}^{\text{Scaled}}X\Pi_\mathcal{Q}w'=\Pi_\mathcal{Q}^\top X^\top \left(\Pi_\mathcal{P}^{\text{Scaled}}\right)^\top \Pi_\mathcal{P}^\top\Pi_\mathcal{P}\Pi^{\text{Scaled}}_\mathcal{P}y.\]
Since $\Pi_\mathcal{P}\Pi_\mathcal{P}^{\text{Scaled}}\in\mathbb{R}^{n\times n}$ satisfies $\left(\Pi_\mathcal{P}\Pi_\mathcal{P}^{\text{Scaled}}\right)_{i_1i_2}=\frac{1}{|S|}$ for any $i_1, i_2\in S$, left-multiplication by $\left(\Pi_\mathcal{P}^{\text{Scaled}}\right)^\top\Pi_\mathcal{P}^\top\Pi_\mathcal{P}\Pi_\mathcal{P}^{\text{Scaled}}$ averages the entries within each color $S\in\mathcal{P}$. Additionally, if $i_1, i_2\in S$, then we have that
\begin{align*}
    (X\hat{w})_{i_1}&=\sum_{T\in\mathcal{Q}}w'_T\sum_{j\in T}X_{i_1j}\\
    &=\sum_{T\in\mathcal{Q}}w'_T\sum_{j\in T}X_{i_2j}\\
    &=(X\hat{w})_{i_2}
\end{align*}
from equitability. Thus,
\begin{equation}\label{eq:useful1}
    \left(\Pi_\mathcal{P}^{\text{Scaled}}\right)^\top\Pi_\mathcal{P}^\top\Pi_\mathcal{P}\Pi_\mathcal{P}^{\text{Scaled}}X\hat{w}=X\hat{w},
\end{equation}
so we can simplify: \[\Pi_\mathcal{Q}^\top X^\top X\hat{w}=\Pi_\mathcal{Q}^\top X^\top \left(\Pi_\mathcal{P}^{\text{Scaled}}\right)^\top \Pi_\mathcal{P}^\top\Pi_\mathcal{P}\Pi^{\text{Scaled}}_\mathcal{P}y.\] In other words, for any $T\in\mathcal{Q}$, we have that \[\sum_{j\in T}\left(X^\top X\hat{w}\right)_j=\sum_{j\in T}\left(X^\top \left(\Pi_\mathcal{P}^{\text{Scaled}}\right)^\top \Pi_\mathcal{P}^\top\Pi_\mathcal{P}\Pi^{\text{Scaled}}_\mathcal{P}y\right)_j.\] We can rewrite $\left(X^\top X\hat{w}\right)_j$ as follows:
\begin{align*}
    \left(X^\top X\hat{w}\right)_j&=\sum_{k=1}^D\left(X^\top X\right)_{jk}\hat{w}_k\\
    &=\sum_{k=1}^D\hat{w}_kX_k^\top X_j\\
    &=\left(\sum_{T\in\mathcal{Q}}w'_T\sum_{k\in T}X_k\right)^\top X_j\\
    &=\sum_{i=1}^n\left(\sum_{T\in\mathcal{Q}}w'_T\sum_{k\in T}X_{ik}\right)X_{ij},
\end{align*}
where $X_j$ is the column vector representing the $j$th feature. We have that
\begin{align*}
    \sum_{i=1}^n\left(\sum_{T\in\mathcal{Q}}w'_T\sum_{k\in T}X_{ik}\right)X_{ij}&=\sum_{S\in\mathcal{P}}\sum_{T\in\mathcal{Q}}w'_T\sum_{i\in S}X_{ij}\sum_{k\in T}X_{ik}\\
    &=\sum_{S\in\mathcal{P}}\sum_{T\in\mathcal{Q}}w'_TX'_{ST}\sum_{i\in S}X_{ij}
\end{align*}
from equitability of $(\mathcal{P}, \mathcal{Q})$ on $X$. It also follows from equitability that $\sum_{i\in S}X_{ij_1}=\sum_{i\in S}X_{ij_2}$ if $j_1$ and $j_2$ share a color in $\mathcal{Q}$, so \[\sum_{S\in\mathcal{P}}\sum_{T\in\mathcal{Q}}w'_TX'_{ST}\sum_{i\in S}X_{ij}=\left(X^\top X\hat{w}\right)_j\] is constant across all $j\in T$. Recall that \[\sum_{j\in T}\left(X^\top X\hat{w}\right)_j=\sum_{j\in T}\left(X^\top \left(\Pi_\mathcal{P}^{\text{Scaled}}\right)^\top \Pi_\mathcal{P}^\top\Pi_\mathcal{P}\Pi^{\text{Scaled}}_\mathcal{P}y\right)_j\] for any $T\in\mathcal{Q}$. Since left-multiplication by $\left(\Pi_\mathcal{P}^{\text{Scaled}}\right)^\top\Pi_\mathcal{P}^\top\Pi_\mathcal{P}\Pi_\mathcal{P}^{\text{Scaled}}$ averages each color $S\in\mathcal{P}$, $\left(\left(\Pi_\mathcal{P}^{\text{Scaled}}\right)^\top \Pi_\mathcal{P}^\top\Pi_\mathcal{P}\Pi^{\text{Scaled}}_\mathcal{P}y\right)_i$ is equal for all $i\in S$, and we can express it as $\left(\left(\Pi_\mathcal{P}^{\text{Scaled}}\right)^\top \Pi_\mathcal{P}^\top\Pi_\mathcal{P}\Pi^{\text{Scaled}}_\mathcal{P}y\right)_S$. This means that \[\left(X^\top \left(\Pi_\mathcal{P}^{\text{Scaled}}\right)^\top \Pi_\mathcal{P}^\top\Pi_\mathcal{P}\Pi^{\text{Scaled}}_\mathcal{P}y\right)_j=\sum_{S\in\mathcal{P}}\left(\left(\Pi_\mathcal{P}^{\text{Scaled}}\right)^\top \Pi_\mathcal{P}^\top\Pi_\mathcal{P}\Pi^{\text{Scaled}}_\mathcal{P}y\right)_S\sum_{i\in S} X_{ij}\] is also constant across $j\in T$ due to equitability. These show that
\begin{equation}\label{eq:linreg}
    X^\top X\hat{w}=X^\top \left(\Pi_\mathcal{P}^{\text{Scaled}}\right)^\top \Pi_\mathcal{P}^\top\Pi_\mathcal{P}\Pi^{\text{Scaled}}_\mathcal{P}y.
\end{equation}
We also have for any $T\in\mathcal{Q}$ that
\begin{align*}
    \sum_{j\in T}\left(X^\top \left(\Pi_\mathcal{P}^{\text{Scaled}}\right)^\top \Pi_\mathcal{P}^\top\Pi_\mathcal{P}\Pi^{\text{Scaled}}_\mathcal{P}y\right)_j&=\sum_{j\in T}\sum_{S\in\mathcal{P}}y'_S\sum_{i\in S}X_{ij}\\
    &=\sum_{S\in\mathcal{P}}y'_S\sum_{i\in S}\sum_{j\in T}X_{ij}\\
    &=\sum_{S\in\mathcal{P}}|S|\,y'_S\,X_{ST}\\
    &=\sum_{S\in\mathcal{P}}\sum_{i\in S}y_i X_{ST}\\
    &=\sum_{S\in\mathcal{P}}\sum_{i\in S}y_i\sum_{j\in T}X_{ij}\\
    &=\sum_{i=1}^n y_i\sum_{j\in T}X_{ij}\\
    &=\sum_{j\in T}\left(X^\top y\right)_j,
\end{align*}
where $X_{ST}=\sum_{j\in T}X_{ij}$ is constant regardless of our choice of $i\in S$ due to equitability. Since we require that $\left(X^\top y\right)_j$ is equal across $j\in T$ and \[\left(X^\top \left(\Pi_\mathcal{P}^{\text{Scaled}}\right)^\top \Pi_\mathcal{P}^\top\Pi_\mathcal{P}\Pi^{\text{Scaled}}_\mathcal{P}y\right)_j=\sum_{S\in\mathcal{P}}y'_S\sum_{i\in S}X_{ij}\] is also equal across $j\in T$ due to equitability, we have that
\begin{equation}\label{eq:useful2}
    X^\top \left(\Pi_\mathcal{P}^{\text{Scaled}}\right)^\top \Pi_\mathcal{P}^\top\Pi_\mathcal{P}\Pi^{\text{Scaled}}_\mathcal{P}y=X^\top y,
\end{equation}
so Equation~\ref{eq:linreg} becomes \[X^\top X\hat{w}=X^\top y\implies\hat{w}=\left(X^\top X\right)^{-1}X^\top y.\]

\subsection{Proof of Reduction for Binary Logistic Regression}\label{app:blrproof}
\begin{proof}[Proof of Theorem~\ref{blrreduction}]
If $\hat{w}_{j_1}=\hat{w}_{j_2}$ for any $j_1$ and $j_2$ that share a color in $\mathcal{Q}$,
\begin{align*}
    \left.\frac{\partial F}{\partial w_j}\right|_{w=\hat{w}} &= \sum_{i=1}^n v_iX_{ij}\left(\hat{y}_i - y_i\right)\\
    &= \sum_{i=1}^n v_iX_{ij}\left(\sigma\!\left(b+\sum_{k=1}^D X_{ik}w_k\right) - y_i\right)\\
    &=\sum_{i=1}^n v_iX_{ij}\left(\sigma\!\left(b+\sum_{T\in\mathcal{Q}}w_T\sum_{k\in T} X_{ik}\right) - y_i\right).
\end{align*}

Under our reduction conditions, for any $T\in\mathcal{Q}$ and $j_1,j_2\in T$,
\begin{align*}
    \left(\left.\frac{\partial F}{\partial w_{j_1}}\right|_{w=\hat{w}}\right)-\left(\left.\frac{\partial F}{\partial w_{j_2}}\right|_{w=\hat{w}}\right)&=\sum_{i=1}^n v_i(X_{ij_1}-X_{ij_2})\left(\sigma\!\left(b+\sum_{T\in\mathcal{Q}}w_T\sum_{k\in T} X_{ik}\right) - y_i\right)\\
    &=\left(\sum_{i=1}^n v_i(X_{ij_1}-X_{ij_2})\,\sigma\!\left(b+\sum_{T\in\mathcal{Q}}w_T\sum_{j\in T}X_{ij}\right)\right)-\sum_{i=1}^n v_i\left(X_{ij_1}y_i-X_{ij_2}y_i\right) \\
    &=\sum_{i=1}^n v_i(X_{ij_1}-X_{ij_2})\,\sigma\!\left(b+\sum_{T\in\mathcal{Q}}w_T\sum_{j\in T}X_{ij}\right)\\
    &=\sum_{S\in\mathcal{P}} \frac{v'_S}{|S|}\,\sigma(z_S)\left(\left(\sum_{i\in S}X_{ij_1}\right)-\sum_{i\in S}X_{ij_2}\right)\\
    &= \sum_{S\in\mathcal{P}} \frac{v'_S}{|S|}\,\sigma(z_S)\cdot 0\\
    &=0,
\end{align*}
where $z_S$ represents the value of the logit \[z_i=b+\sum_{T\in\mathcal{Q}}w_T\sum_{j\in T}X_{ij},\] which is equal for all $i\in S$ for some $S\in\mathcal{P}$. Hence, $\left.\frac{\partial F}{\partial w_{j_1}}\right|_{w=\hat{w}}=\left.\frac{\partial F}{\partial w_{j_2}}\right|_{w=\hat{w}}$ for all $j_1,j_2\in T$.
\end{proof}

\subsection{Proof of Reduction for Multiclass Logistic Regression}\label{app:mclrproof}
\begin{proof}[Proof of Theorem~\ref{mclrreduction}]
We have that
\begin{align*}
    \frac{\partial F}{\partial W_{jc}}&=\sum_{i=1}^n v_iX_{ij}(\hat{y}_{ic}-1\{y_i=c\})\\
    &=\sum_{i=1}^n v_iX_{ij}\left(\frac{e^{b_c+\sum_{k=1}^DX_{ik}W_{kc}}}{\sum_{l=1}^K e^{b_l+\sum_{k=1}^DX_{ik}W_{kl}}}-1\{y_i=c\}\right).
\end{align*}

If $(\mathcal{P}, \mathcal{Q})$ is a reduction coloring and the two rows $\hat{W}_{j_1}=\hat{W}_{j_2}$ whenever $j_1$ and $j_2$ share a color in $\mathcal{Q}$ (the entries within $\hat{W}_{j_1}$ and $\hat{W}_{j_2}$ do not necessarily have to be equal), then
\begin{align*}
    \left.\frac{\partial F}{\partial W_{jc}}\right|_{W=\hat{W}}&=\sum_{i=1}^n v_iX_{ij}\left(\frac{e^{b_c+\sum_{T\in\mathcal{Q}}\sum_{k\in T}X_{ik}W_{kc}}}{\sum_{l=1}^K e^{b_l+\sum_{T\in\mathcal{Q}}\sum_{k\in T}X_{ik}W_{kl}}}-1\{y_i=c\}\right)\\
    &=\sum_{i=1}^n v_iX_{ij}\left(\frac{e^{b_c+\sum_{T\in\mathcal{Q}}W_{Tc}\sum_{k\in T}X_{ik}}}{\sum_{l=1}^K e^{b_l+\sum_{T\in\mathcal{Q}}W_{Tl}\sum_{k\in T}X_{ik}}}-1\{y_i=c\}\right)\\
    &=\sum_{S\in\mathcal{P}}\sum_{i\in S} v_iX_{ij}\left(\hat{y}_{Sc}-1\{y_i=c\}\right),
\end{align*}
where \[\hat{y}_{Sc}=\frac{e^{b_c+\sum_{T\in\mathcal{Q}}W_{Tc}\sum_{k\in T}X_{ik}}}{\sum_{l=1}^K e^{b_l+\sum_{T\in\mathcal{Q}}W_{Tl}\sum_{k\in T}X_{ik}}}\] is constant across $i\in S$ if $(\mathcal{P}, \mathcal{Q})$ is equitable on $X$.

Then under reduction conditions, for any $T\in\mathcal{Q}$ and $j_1,j_2\in T$,
\begin{align*}
    \left(\left.\frac{\partial F}{\partial W_{j_1c}}\right|_{W=\hat{W}}\right)-\left(\left.\frac{\partial F}{\partial W_{j_2c}}\right|_{W=\hat{W}}\right)&=\sum_{S\in\mathcal{P}}\sum_{i\in S} v_i(X_{ij_1}-X_{ij_2})\left(\hat{y}_{Sc}-1\{y_i=c\}\right)\\
    &=\left(\sum_{S\in\mathcal{P}}\frac{v'_S}{|S|}\hat{y}_{Sc}\sum_{i\in S}(X_{ij_1}-X_{ij_2})\right)-\sum_{i=1}^nv_i(X_{ij_1}-X_{ij_2})\cdot 1\{y_i=c\}\\
    &=\sum_{S\in\mathcal{P}}\frac{v'_S}{|S|}\hat{y}_{Sc}\left(\left(\sum_{i\in S}X_{ij_1}\right)-\sum_{i\in S}X_{ij_2}\right)\\
    &=0.
\end{align*}
Hence, $\left.\frac{\partial F}{\partial W_{j_1c}}\right|_{W=\hat{W}}=\left.\frac{\partial F}{\partial W_{j_2c}}\right|_{W=\hat{W}}$ for all $j_1,j_2\in T$.
\end{proof}

\subsection{Preservation of Ridge Regression Closed-Form}\label{app:enetproof}
We can verify that the closed-form solution for ridge regression, \[w=\left(X^\top WX+\lambda_2I\right)^{-1}X^\top Wy,\] is preserved by our reduction coloring for linear regression. Note that the regularization term in the reduced problem is \[\sum_{T\in\mathcal{Q}}\lambda_2|T|(w'_T)^2.\] Hence, in the reduced problem, we have \[\left(\Pi_\mathcal{Q}^\top X^\top \left(\Pi_\mathcal{P}^{\text{Scaled}}\right)^\top W'\Pi_\mathcal{P}^{\text{Scaled}}X\Pi_\mathcal{Q}+Q'\right)w'=\Pi_\mathcal{Q}^\top X^\top \left(\Pi_\mathcal{P}^{\text{Scaled}}\right)^\top W'\Pi_\mathcal{P}^{\text{Scaled}}y,\] where $Q'$ is a diagonal matrix given by $Q'_{TT}=\lambda_2|T|$ for all $T\in\mathcal{Q}$. Because of Equation~\ref{eq:useful1} and Equation~\ref{eq:useful2} in Appendix~\ref{app:linregproof}, we can rewrite this as \[\Pi_\mathcal{Q}^\top X^\top X\hat{w}+Q'w'=\Pi_\mathcal{Q}^\top X^\top y.\] This implies that \[\lambda_2|T|w'_T+\sum_{j\in T}\left(X^\top X\hat{w}\right)_j=\sum_{j\in T}\left(X^\top y\right)_j\] for any $T\in\mathcal{Q}$. But since $\left(X^\top X\hat{w}\right)_j$ and $\left(X^\top y\right)_j$ are both constant across $j\in T$, dividing by $|T|$ gives \[\lambda_2\hat{w}+X^\top X\hat{w}=X^\top y\implies\hat{w}=\left(X^\top X+\lambda_2I\right)^{-1}X^\top y.\]

\subsection{Proof of Reduction for Kernel Ridge Regression}\label{kernellinregreductionproof}
\begin{proof}[Proof of Theorem~\ref{kernellinregreduction}]
We have that \[\left.\frac{\partial F}{\partial \alpha_{j}}\right|_{\alpha=\hat{\alpha}}=2\left(\sum_{i=1}^n v_iK_{ij}(K\hat{\alpha}+b1_n-y)_i\right)+2\lambda (K\hat{\alpha})_j.\] Suppose that $T_1\in\mathcal{Q}$ and $j_1, j_2\in T_1$. From equitability on $K$,
\begin{align}
    (K\hat{\alpha})_{j_1}&=\sum_{i=1}^n K_{ij_1}\hat{\alpha}_i\nonumber\\
    &=\sum_{T_2\in\mathcal{Q}}\hat{\alpha}_{T_2}\sum_{i\in T_2}K_{ij_1}\nonumber\\
    &=\sum_{T_2\in\mathcal{Q}}\hat{\alpha}_{T_2}\sum_{i\in T_2}K_{ij_2}\nonumber\\
    &=(K\hat{\alpha})_{j_2},\nonumber
\end{align}
so the $2\lambda (K\hat{\alpha})_j$ term is constant across all $j$ in the same color. Also, since $v_i$ is constant on each $T\in\mathcal{Q}$, we can write $v_i = \frac{v'_T}{|T|}$ for all $i\in T$. Hence, we can decompose the summation term of the derivative with respect to $\alpha_{j_1}$ as
\begin{align}
    &\left(\sum_{i=1}^nv_iK_{ij_1}(K\hat{\alpha})_i\right)+\left(\sum_{i=1}^nv_iK_{ij_1}b\right)-\sum_{i=1}^nv_iK_{ij_1}y_i\nonumber\\
    &\qquad =\left(\sum_{T\in\mathcal{Q}}\frac{v'_T}{|T|}(K\hat{\alpha})_T\sum_{i\in T}K_{ij_1}\right)+b\left(\sum_{T\in\mathcal{Q}}\frac{v'_T}{|T|}\sum_{i\in T}K_{ij_1}\right)-\sum_{i=1}^nv_iK_{ij_1}y_i\nonumber\\
    &\qquad =\left(\sum_{T\in\mathcal{Q}}\frac{v'_T}{|T|}(K\hat{\alpha})_T\sum_{i\in T}K_{ij_2}\right)+b\left(\sum_{T\in\mathcal{Q}}\frac{v'_T}{|T|}\sum_{i\in T}K_{ij_2}\right)-\sum_{i=1}^nv_iK_{ij_2}y_i\nonumber\\
    &\qquad =\left(\sum_{i=1}^nv_iK_{ij_2}(K\hat{\alpha})_i\right)+\left(\sum_{i=1}^nv_iK_{ij_2}b\right)-\sum_{i=1}^nv_iK_{ij_2}y_i,\nonumber
\end{align}
which proves Theorem~\ref{kernellinregreduction}.
\end{proof}

\subsection{Proof of Reduction for Kernel Binary Logistic Regression}\label{kernelblrreductionproof}
\begin{proof}[Proof of Theorem~\ref{kernelblrreduction}]
The derivative of the loss function is
\begin{align}
    \left.\frac{\partial F}{\partial \alpha_{j}}\right|_{\alpha=\hat{\alpha}}&=\left(\sum_{i=1}^n v_iK_{ij}(\hat{y}_i-y_i)\right)+2\lambda (K\hat{\alpha})_j\nonumber\\
    &=\left(\sum_{i=1}^n v_iK_{ij}\,\sigma((K\hat{\alpha})_i+b)\right)-\left(\sum_{i=1}^n v_iK_{ij}y_i\right)+2\lambda (K\hat{\alpha})_j\nonumber\\
    &=\left(\sum_{T\in\mathcal{Q}}\frac{v'_T}{|T|}\sigma((K\hat{\alpha})_T+b)\sum_{i\in T}K_{ij}\right)-\left(\sum_{i=1}^n v_iK_{ij}y_i\right)+2\lambda (K\hat{\alpha})_j,\nonumber
\end{align}
where the last equality is true because we know from the proof of Theorem~\ref{kernellinregreduction} in Appendix~\ref{kernellinregreductionproof} that $(K\hat{\alpha})_j$ is constant across $j\in T$ for any $T\in\mathcal{Q}$. This derivative as a whole is constant across $j\in T$ because we have from equitability that the first term in the above expression is constant, and from Appendix~\ref{kernellinregreductionproof} that the latter two terms are also constant.
\end{proof}

\section{Full Experimental Results}\label{fullresults}
We used a binary logistic regression model with regularization turned off (no penalty term) and a maximum of 10,000 iterations. All remaining hyperparameters were set to their default values in scikit-learn \cite{scikit-learn}.

The following table shows the number of reduced samples and reduced features for each dataset. We used a binary ground truth without fractional values, so the number of reduced samples may be greater than $|\mathcal{P}|$.
\begin{table}[H]
  \caption{Size comparison between original and reduced datasets after compression.}
  \begin{center}
    \begin{small}
      \begin{sc}
        \begin{tabular}{lcccc}
          \toprule
          Dataset 
          & Orig.\ samples 
          & Red.\ samples 
          & Orig.\ features 
          & Red.\ features \\
          \midrule
          Titanic
          & 1309 & 1147 & 378 & 367 \\

          skin\_nonskin
          & 245057 & 51444 & 3 & 3 \\

          phishing
          & 11055 & 5849 & 68 & 68 \\

          a7a
          & 16100& 13900& 122& 120\\

          breast-cancer
          & 683 & 675 & 10 & 10 \\
          \bottomrule
        \end{tabular}
      \end{sc}
    \end{small}
  \end{center}
  \vskip -0.1in
\end{table}
To empirically verify that compression preserves the training objective and predictive behavior, we also compare the total objective value and maximum discrepancy in predicted probabilities between the original and lifted reduced solutions.
\begin{table}[H]
  \caption{Objective difference (absolute and relative) and train data prediction difference (absolute) between original and reduced datasets after compression.}
  \begin{center}
    \begin{small}
      \begin{sc}
        \begin{tabular}{lccc}
          \toprule
          Dataset 
          & Absolute $\Delta$ Obj 
          & Relative $\Delta$ Obj
          & Max absolute $\Delta\hat{p}$ \\
          \midrule
          Titanic
          & $4.9659\times 10^{-3}$& $1.3156\times 10^{-3}$\%& $4.7879\times 10^{-3}$ \\

          skin\_nonskin
          & $6.4958\times 10^{-4}$& $1.0764\times 10^{-6}$\%& $1.8252\times 10^{-4}$\\

          phishing
          & $5.3111\times 10^{-2}$& $3.3928\times 10^{-3}$\%& $8.9266\times 10^{-3}$\\

          a7a
          & $3.2449\times 10^{-3}$& $6.2031\times 10^{-5}$\%& $4.4786\times 10^{-3}$\\

          breast-cancer
          & $1.1733\times 10^{-4}$& $2.2807\times 10^{-4}$\%& $1.0820\times 10^{-3}$\\
          \bottomrule
        \end{tabular}
      \end{sc}
    \end{small}
  \end{center}
  \vskip -0.1in
\end{table}

Finally, we compare training runtime on CPU. Runtime is given in seconds and includes time taken for StandardScaler to fit and transform the data matrix.
\begin{table}[H]
  \caption{Effects of compression on training runtime (mean and standard deviation across 50 trials per dataset).}
  \begin{center}
    \begin{small}
      \begin{sc}
        \begin{tabular}{lccc}
          \toprule
          Dataset & Training runtime (original)  & Compression time         & Training runtime (reduced)  \\
          \midrule
          Titanic & 3.8311$\pm$0.8025& 0.0187$\pm$0.0268& 0.5947$\pm$0.2816\\
          skin\_nonskin & 1.2156$\pm$0.3461& 0.1519$\pm$0.0603& 0.1895$\pm$0.0647\\
          phishing & 0.0694$\pm$0.0216& 0.0480$\pm$0.0414& 0.0144$\pm$0.0033\\
          a7a & 0.8996$\pm$0.1889& 0.1404$\pm$0.1323& 0.6836$\pm$0.1674\\
          breast-cancer & 0.3041$\pm$0.2043 & 0.0004$\pm$0.0004& 0.2765$\pm$0.1973\\
          \bottomrule
        \end{tabular}
      \end{sc}
    \end{small}
  \end{center}
  \vskip -0.1in
\end{table}


\end{document}